\newtheorem{theorem}{Theorem}
\newtheorem{lemma}{Lemma}
\newtheorem{proposition}{Proposition}
\newtheorem{corollary}{Corollary}
\newcommand{\eps}{\varepsilon}
\newcommand{\RR}{\mathbb{R}}
\newcommand{\conv}{{\rm conv}}
\newcommand{\etal}{{et~al.}}
\newcommand{\ie}{{i.e.}}
\newcommand{\eg}{{e.g.}}
\newcommand{\old}[1]{}
\title{Covering Paths for Planar Point Sets}
\author{
Adrian Dumitrescu\thanks{Computer Science, University of Wisconsin--Milwaukee, USA.
Email: \texttt{dumitres@uwm.edu}. Research was supported by the NSF
grant DMS-1001667.}
\and
D\'aniel Gerbner\thanks{Alfr\'ed R\'enyi Institute of Mathematics,
  Hungarian Academy of Sciences, Budapest, Hungary. Email:
  \texttt{gerbner.daniel@renyi.mta.hu}. Research was supported by OTKA,
  grant NK 78439 and by OTKA under EUROGIGA project GraDR
  10-EuroGIGA-OP-003.}
\and
Bal\'azs Keszegh\thanks{Alfr\'ed R\'enyi Institute of Mathematics,
  Hungarian Academy of Sciences, Budapest, Hungary. Email:
\texttt{keszegh.balazs@renyi.mta.hu}. Research was supported by OTKA,
grant NK 78439 and by OTKA under EUROGIGA project GraDR 10-EuroGIGA-OP-003.}
\and
Csaba D. T\'oth\thanks{Department of Mathematics, California State
  University, Northridge, USA and University of Calgary, Canada. Email:
  \texttt{cdtoth@ucalgary.ca}.
Research was supported by the NSERC grant RGPIN 35586, the NSF
grant CCF-0830734, and the Fields Institute for Research in
Mathematical Sciences, Toronto, Canada.}
}
\begin{document}

\maketitle


\begin{abstract}
Given $n$ points in the plane, a \emph{covering path} is a polygonal path
that visits all the points. If no three points are collinear, every covering
path requires at least $n/2$ segments, and $n-1$ straight line segments
obviously suffice even if the covering path is required to be noncrossing.
We show that every set of $n$ points in the plane admits a (possibly self-crossing)
covering path consisting of $n/2 +O(n/\log{n})$ straight line segments.
If the path is required to be noncrossing, we prove that
$(1-\eps)n$ straight line segments suffice for a small constant $\eps>0$,
and we exhibit $n$-element point sets that require at least $5n/9 -O(1)$
segments in every such path.
Further, the analogous question for noncrossing \emph{covering trees}
is considered and similar bounds are obtained.
Finally, it is shown that computing a noncrossing covering path for $n$ points
in the plane requires $\Omega(n \log{n})$ time in the worst case.
\end{abstract}

\section{Introduction}  \label{sec:intro}

In this paper we study polygonal paths visiting a finite set of points
in the plane.
A \emph{spanning path} is a directed Hamiltonian path
drawn with straight line edges. Each edge in the path connects two
of the points, so a spanning path can only turn at one of the given
points. Every spanning path of a set of $n$ points consists of $n-1$
segments. A \emph{covering path} is a directed polygonal path in the
plane that visits all the points. A covering path can make a turn at
any point, \ie, either at one of the given points or at a (chosen) Steiner point.
Obviously, a spanning path for a point set $S$ is also a covering path for $S$.
If no three points in $S$ are collinear, every covering path consists of at
least $\lceil n/2\rceil$ segments. A \emph{minimum-link} covering path
for $S$ is one with the smallest number of segments (links).
A point set is said to be in \emph{general position} if no three
points are collinear.

We study the following two questions concerning covering paths posed
by Mori\'c~\cite{Mo10,Mo11} as a generalization of the well-known
puzzle of linking $9$ dots in a $3 \times 3$ grid with a
polygonal path having only $4$ segments~\cite{Lo1914}.
Another problem which leads to these questions is separating red
from blue points~\cite{FKMU10}.  
\begin{enumerate} \itemsep -1pt
\item What is the minimum number, $f(n)$, such that every set of
$n$ points in the plane can be covered by a (possibly self-intersecting)
polygonal path with $f(n)$ segments?
\item What is the minimum number, $g(n)$, such that every set of
$n$ points in the plane can be covered by a \emph{noncrossing}
polygonal path with $g(n)$ segments?
\end{enumerate}

If no three points are collinear, then each segment of
a covering path contains at most two points, thus $\lceil n/2\rceil$
is a trivial lower bound for both $f(n)$ and $g(n)$.
Mori\'c conjectured that the answer to the first problem is
$n(1/2+o(1))$ while the answer to the second is $n(1-o(1))$.
We confirm his first conjecture (Theorem~\ref{T1}) but refute
the second (Theorem~\ref{paththm})\footnote{The first item was observed
by the current authors during the Canadian Conference CCCG 2010 and
was also communicated to the authors of~\cite{DO11}.}.
A consideration of these questions in retrospect appears
in~\cite{DO11}. 

\begin{theorem}\label{T1}
Every set of $n$ points in the plane admits a (possibly self-crossing)
covering path consisting of $n/2 +O(n/\log{n})$ line segments.
Consequently, $\lceil n/2\rceil \leq f(n) \leq n/2 +O(n/\log{n})$.
A~covering path with $n/2 +O(n/\log{n})$ segments
can be computed in $O(n^{1+\eps})$ time, for every $\eps>0$.
\end{theorem}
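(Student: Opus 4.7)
My plan is to prove the upper bound $f(n)\leq n/2+O(n/\log{n})$ by iteratively extracting large convex subsets of the given point set and covering each subset with nearly half as many segments as its size. The trivial lower bound $\lceil n/2\rceil \leq f(n)$ comes from general-position instances, which is the hardest case for the upper bound; collinear triples can only help, because a single segment covers arbitrarily many collinear points and such configurations can be peeled off in a preprocessing phase. I will therefore assume the input is in general position.

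The geometric core is the following lemma: \emph{$k$ points in convex position admit a covering path with $\lceil k/2\rceil+O(1)$ segments.} Label the vertices $p_1,\ldots,p_k$ in convex-hull order and pair them as $(p_1,p_2),(p_3,p_4),\ldots$; let $\ell_i$ be the line through the $i$-th pair. Two consecutive lines $\ell_i$ and $\ell_{i+1}$ support polygon edges separated by exactly one other edge, so by convexity they meet at a point $q_i$ strictly outside the polygon, obtained by extending $\ell_i$ past $p_{2i}$ and $\ell_{i+1}$ backward past $p_{2i+1}$. Both $p_{2i-1}$ and $p_{2i}$ then lie strictly between the junctions $q_{i-1}$ and $q_i$ on $\ell_i$, making the polygonal path $q_0\to q_1\to\cdots\to q_{k/2}$ a valid covering path with $\lceil k/2\rceil$ segments. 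Degenerate configurations (parallel consecutive lines, as in a parallelogram) are handled by re-pairing or a tiny local perturbation and cost only an additive constant per subset.

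The upper bound now follows by iteration: repeatedly apply the Erd\H{o}s--Szekeres theorem to find a convex subset of size $k_i=\Omega(\log n_i)$ among the $n_i$ residual points, cover it by the lemma, and delete it. Stratifying iterations by the binary order of magnitude of $n_i$ gives a geometric-series bound of $T=O(n/\log{n})$ on the total number of iterations (within the range $n_i\in[2^j,2^{j+1})$, each iteration removes at least $\Omega(j)$ points, contributing $O(2^j/j)$ iterations, and summing is dominated by the top stratum). The covering subpaths contribute at most $\sum_i(\lceil k_i/2\rceil+O(1))\leq n/2+O(T)$ segments, and $T-1$ additional linking segments concatenate them into a single covering path of length $n/2+O(n/\log{n})$, as required.

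For the $O(n^{1+\eps})$ time bound, the main technical hurdle is extracting a convex subset of size $\Omega(\log n)$ at each iteration in amortized subpolynomial time: a Chv\'atal--Klincsek-style $\Theta(n^3)$ dynamic program applied per iteration is far too slow. I expect to need either a batched once-for-all precomputation of convex chains supported by a grid decomposition of the point set, or a dynamic convex-position data structure handling point deletions in $O(n^{\eps})$ amortized time per update. The existence portion of the theorem, by contrast, rests cleanly on the convex-position lemma and the Erd\H{o}s--Szekeres-based iteration bound.
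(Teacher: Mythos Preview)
Your existence argument is correct and follows the same outline as the paper: repeatedly extract an Erd\H{o}s--Szekeres subset, cover it with $\lceil k/2\rceil$ segments, and link the subpaths. Two minor differences are worth noting. First, the paper extracts $k$-\emph{cups} and $k$-\emph{caps} rather than arbitrary convex subsets; for an $x$-monotone cap or cup the covering path is immediate (extend every other edge of the chain; consecutive extended edges necessarily cross), so no case analysis for parallel lines or re-pairing is needed, and the $O(1)$ overhead per subset disappears. Second, the paper's iteration bookkeeping is simpler than your stratification: just stop extracting once the residual set $T$ has fewer than $n/\log n$ points, cover the residual trivially with one segment per point, and observe that every prior extraction removed $\Omega(\log(n/\log n))=\Omega(\log n)$ points, giving $O(n/\log n)$ iterations directly. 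Your stratified sum is correct but unnecessary.

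The algorithmic claim, however, has a genuine gap. You correctly identify that $O(n/\log n)$ calls to the $\Theta(n^3)$ Chv\'atal--Klincsek algorithm is too slow, but your proposed fixes (batched precomputation, dynamic data structures with $O(n^\eps)$ amortized updates) are speculative and not what the paper does. The actual trick is much simpler and requires no new data structure: once you know the problem can be solved in \emph{any} polynomial time (say $O(n^4/\log n)$ via the naive approach), you arbitrarily partition the $n$ points into $n^{1-\eps/3}$ groups of size $n^{\eps/3}$ each, solve each group independently in time $O((n^{\eps/3})^4)=O(n^{4\eps/3})$, and link the resulting subpaths with one extra segment per group. The total running time is $n^{1-\eps/3}\cdot O(n^{4\eps/3})=O(n^{1+\eps})$, and the segment count remains $n/2+O(n/\log n)$ because each group of size $m=n^{\eps/3}$ contributes $m/2+O(m/\log m)$ segments and the $n^{1-\eps/3}$ linking segments are absorbed in the error term. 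You were looking for a fast algorithm on the whole set; the paper instead exploits that a slow algorithm on tiny sets already suffices.
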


As expected, the noncrossing property is much harder to deal with.
Every set of $n$ points in the plane trivially admits a
noncrossing path consisting of $n-1$ straight line segments that
visits all the points, \eg, by sorting the points along some
direction, and then connecting them in this order. On the other hand,
again trivially, any such covering path requires at least $\lceil n/2
\rceil$ segments, if no three points are collinear.
We provide the first nontrivial upper and lower bounds for $g(n)$,
in particular disproving the conjectured relation $g(n)=n(1-o(1))$.

\begin{theorem} \label{paththm}
Every set of $n$ points in the plane admits a noncrossing covering path
with at most  $\lceil (1-1/601080391)n\rceil-1$ segments.
Consequently, $g(n) \leq \lceil (1-1/601080391)n\rceil-1$.
A noncrossing covering path with at most this many
segments can be computed in $O(n\log n)$ time.
\end{theorem}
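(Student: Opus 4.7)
The plan is to improve the trivial upper bound of $n-1$ segments — obtained from a sorted left-to-right sweep — by isolating a constant-size \emph{gadget}, i.e., a cluster of points inside which a short noncrossing sub-path saves at least one segment compared to the trivial covering, and then showing that any $n$-point set contains enough disjoint copies of this gadget to yield the advertised savings. The ratio $1/601080391$ will reflect the number of points needed to guarantee one unit of savings.

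I would proceed in four steps. \emph{Step 1 (baseline sweep).} After a preliminary rotation that makes all $x$-coordinates distinct, sort the input points by $x$-coordinate and connect them in order. This yields an $x$-monotone, hence noncrossing, covering path with exactly $n-1$ segments. The strategy is to locally replace short stretches of this sweep with cheaper sub-paths. \emph{Step 2 (gadget).} I would define the gadget as a configuration of $c$ points lying in a sufficiently narrow vertical strip with a specific geometric property that admits a noncrossing covering sub-path of at most $c-2$ segments, whose two endpoints sit on the left and right boundaries of the strip. The key leverage is that a single segment can contain two input points when those points are (nearly) collinear with a chosen Steiner vertex, and a ``V-piece'' (two segments meeting at a Steiner vertex) can cover up to four input points, shaving one segment from the trivial count.

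\emph{Step 3 (finding many disjoint gadgets).} This is the crux. Partition the $x$-sorted sequence into consecutive blocks of $c$ points each. By a careful case analysis of the geometry inside each block — likely an iterated pigeonhole on the angular arrangement and relative positions of the points inside the strip — argue that every block of $c$ points contains the gadget. This produces $\lfloor n/c\rfloor$ pairwise vertex-disjoint gadgets, each saving one segment, for a total segment count of at most $n-1-\lfloor n/c\rfloor\le \lceil(1-1/c)n\rceil-1$. \emph{Step 4 (global assembly and runtime).} Because each gadget is confined to its block's vertical strip and can be arranged to enter and leave the strip $x$-monotonically, splicing the gadget sub-paths into the baseline $x$-monotone sweep produces a globally $x$-monotone — hence noncrossing — covering path. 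Sorting takes $O(n\log n)$ and each block is processed in $O(1)$ time, so the overall algorithm runs in $O(n\log n)$ time.

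The main obstacle will be Step 3: establishing that the gadget is \emph{universal} in the sense of appearing inside every sufficiently large consecutive block, no matter how adversarial the configuration. There is an inherent tension between wanting a small gadget (to get many disjoint copies and a larger savings rate) and wanting a universal one (guaranteed to appear in every cluster), and the enormous constant $c=601080391$ is the combinatorial cost of universality — presumably the output of a multi-level pigeonhole that must handle all degenerate angular positions. A secondary obstacle is verifying that each local savings integrates into the global sweep without breaking the noncrossing property; this I expect to resolve by keeping the gadget sub-path inside its narrow strip and appealing to $x$-monotonicity as the global non-crossing certificate.
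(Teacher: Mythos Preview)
Your high-level architecture---partition the $x$-sorted points into vertical strips of size $m=601080391$, save one segment per strip, and link the strips---matches the paper. You are also right that the constant is the price of a Ramsey-type guarantee: in fact $601080391=\binom{32}{16}+1$, and the ``gadget'' is an $18$-cap or $18$-cup, whose existence in every block is the Erd\H{o}s--Szekeres theorem. So Step~3 is not a mysterious multi-level pigeonhole but a single invocation of a classical result.

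The genuine gap is in Steps~2 and~4. Your V-piece idea is correct in isolation: a $k$-cap can be covered by a zigzag of $k/2$ segments, each through two consecutive cap points. But this zigzag is \emph{not} $x$-monotone---it has $k/2-1$ reflex vertices---so your plan to use $x$-monotonicity as the global noncrossing certificate fails precisely at the gadget. Worse, the zigzag splits the strip into a convex region below and a \emph{nonconvex} region above; covering the remaining points above the zigzag by the obvious decomposition into $k/2$ convex pieces costs $k/2-1$ extra Steiner points, which wipes out the entire savings (you end up with $n+1$ segments, not $n-2$). This is the real obstacle, and your proposal does not address it.

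The paper's actual work (its Lemma~2) is a delicate local modification of the zigzag: at every third reflex vertex $q_i$ one either (Case~1) finds a point of $S$ in the wedge above $q_i$ and routes the decomposing ray through it, replacing a Steiner point by a data point, or (Case~2) finds no such point and re-routes two zigzag edges through a convex-hull edge of the nearby points, eliminating a reflex vertex altogether. Either case saves one segment, giving $\lfloor k/6\rfloor$ savings per strip; with $k=18$ this yields $m-2$ segments per strip and the stated bound after linking. You should replace your ``$x$-monotonicity certificate'' with this convex-decomposition-and-local-repair argument.
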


\begin{theorem}\label{thm:path-lower}
There exist $n$-element point sets that
require at least $(5n-4)/9$ segments in any noncrossing
covering path. Consequently, $g(n) \geq (5n-4)/9$.
\end{theorem}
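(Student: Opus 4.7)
My plan is to exhibit an explicit family of point configurations and prove the lower bound by a per-cluster counting argument. For each suitable $n$, I construct $S_n$ as $k = \lfloor n/9\rfloor$ well-separated ``gadgets'' of $9$ points placed in convex position (with a few leftover points handled by the additive constant). Each gadget is a carefully chosen $9$-point configuration inside a small enclosing disk (for instance a perturbed $3\times 3$ grid, or another arrangement rich in short collinear triples), and the global placement ensures that no three points from distinct gadgets are collinear.

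For any noncrossing covering path $\pi$, the convex arrangement together with the noncrossing property forces $\pi$ to visit the gadgets in convex-hull order (up to reversal), with each gadget traversed by a single contiguous subpath. For each gadget $C$, let $m_C$ be the number of segments of $\pi$ that intersect the small disk around $C$. The central \emph{local claim} is that $m_C \ge 6$ for every $C$. The general-position bound gives only $\lceil 9/2 \rceil = 5$, so the extra segment must be extracted from the structure of the gadget: the first and last segments of the subpath $\pi_C$ enter from outside the disk and, by the gadget's design, can cover at most one point of $C$ each. Summing over all $k$ gadgets and observing that a segment of $\pi$ intersects at most two cluster-disks (at most $k-1$ transition segments across all pairs of consecutive gadgets),
\[
6k \;\le\; \sum_C m_C \;\le\; |\pi| + (k - 1),
\]
so $|\pi| \ge 5k + 1$. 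After absorbing the leftover points and rounding, this gives $|\pi| \ge \lceil (5n-4)/9\rceil$.

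The main obstacle is establishing the local bound $m_C \ge 6$. The $5$-segment bound is tight for many natural $9$-point configurations (for example, $9$ points in convex position admit a noncrossing covering path of length exactly $5$, using four nearly-parallel chord lines whose pairwise intersections outside the point set serve as path vertices, closed off by a fifth segment through the remaining point), so the gadget must be designed to forbid every noncrossing $5$-segment covering. I expect this to require a careful case analysis of candidate $5$-segment noncrossing paths through the gadget, organised around the few ``long'' segments that can cover three collinear points, and exploiting the noncrossing incidence constraints between them to reach a contradiction. A secondary technical concern is verifying that a transit segment truly contributes to at most two local sums, which follows from the cluster separation and the no-three-collinear condition across clusters. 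The final $-4/9$ constant should emerge from a careful accounting of boundary effects at the first and last gadgets in the visit order.
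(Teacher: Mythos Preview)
Your approach is structurally different from the paper's, and it has two genuine gaps, one of which you do not acknowledge.

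The first gap is the claim that a noncrossing covering path must visit the gadgets in convex-hull order, each by a single contiguous subpath. This is false. Covering paths may use Steiner vertices anywhere in the plane, so the path can leave a gadget, wander arbitrarily far, and re-enter the same gadget later without crossing itself; nothing forces contiguity or cyclic order. Once contiguity fails, your double-counting inequality $\sum_C m_C \le |\pi| + (k-1)$ collapses: the number of ``transition'' segments (those intersecting two cluster-disks) is not bounded by $k-1$, and every extra transition weakens the lower bound on $|\pi|$ by one. In fact a path that re-enters gadgets many times could have $\Theta(|\pi|)$ transition segments, killing the argument entirely.

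The second gap you flag yourself: the local bound $m_C \ge 6$ is asserted but not proved, and no concrete $9$-point gadget is specified for which it holds. Your heuristic (``the first and last segments enter from outside and cover at most one point each'') presupposes a single contiguous visit, so it is tied to the first gap. Even granting contiguity, designing a $9$-point set that admits no noncrossing $5$-segment cover with constrained entry/exit directions is nontrivial, and the perturbed $3\times 3$ grid you suggest is unlikely to work once Steiner vertices are allowed.

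The paper takes a completely different route. It places $n=2k$ points as $k$ \emph{twin pairs} $\{a_i,b_i\}$ with each $a_i$ on a parabola and $b_i$ extremely close to $a_i$ along a carefully chosen direction. It then classifies segments of a noncrossing covering path as \emph{perfect} (containing two points of $S$) or \emph{imperfect}, and proves via a charging scheme that $s_2 \le 4(s_0+s_1+1)$: every perfect segment is charged to a nearby endpoint of an imperfect segment (or of the path), with each such endpoint charged at most twice. The geometry of the parabola and of the twin directions is what makes the charging work; there is no gadget, no cluster decomposition, and no contiguity assumption. The bound $(5n-4)/9$ then falls out of the linear system $n=s_1+2s_2$, $|\pi|=s_0+s_1+s_2$, $s_2\le 4(s_0+s_1+1)$.
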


In the proof of Theorem~\ref{paththm}, we construct a noncrossing
covering path that can easily be extended to a noncrossing covering
cycle by adding one Steiner point (and two segments).

\begin{corollary} \label{cor:cycle}
Every set of $n\geq 2$ points in the plane admits a noncrossing covering
cycle with at most $\lceil (1-1/601080391)n\rceil+1$ segments. A noncrossing
covering cycle of at most this many segments can be
computed in $O(n\log n)$ time.
\end{corollary}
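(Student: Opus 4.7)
The plan is to derive the corollary from Theorem~\ref{paththm} by closing the covering path with a single Steiner point. First, invoke Theorem~\ref{paththm} to obtain a noncrossing covering path $\Pi$ with at most $\lceil(1 - 1/c)n\rceil - 1$ segments, where $c = 601080391$, in $O(n \log n)$ time; let $p$ and $q$ denote its endpoints. Then adjoin a Steiner point $s$ together with two new segments $sp$ and $sq$, yielding a noncrossing closed polygonal curve of at most $\lceil(1 - 1/c)n\rceil + 1$ segments.

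The key task is to position $s$ so that $sp$ and $sq$ cross neither each other nor $\Pi$. I would exploit (or modify, without any increase in the segment count) the construction in the proof of Theorem~\ref{paththm} so that both endpoints $p$ and $q$ are vertices of the convex hull $\conv(S)$ of the input set $S$. Let $D_p \subseteq S^1$ be the set of directions $\mathbf{v}$ for which the ray from $p$ in direction $\mathbf{v}$ lies in the closed exterior of $\conv(S)$, and define $D_q$ analogously. Because $p$ and $q$ are hull vertices, the interior angles of $\conv(S)$ at $p$ and at $q$ are strictly less than $\pi$, so each of $D_p, D_q$ has angular measure strictly greater than $\pi$. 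By inclusion-exclusion on $S^1$, the intersection $D_p \cap D_q$ has positive measure and is therefore nonempty. Picking any $\mathbf{v}$ in the interior of $D_p \cap D_q$ and placing $s$ sufficiently far from $p, q$ along direction $\mathbf{v}$, both $sp$ and $sq$ lie (by convexity of $\conv(S)$) in the closed exterior of $\conv(S)$ except at $p, q$ themselves; a generic choice of $\mathbf{v}$ (one not parallel to $q - p$) ensures that $s, p, q$ are not collinear so the two segments meet only at $s$. Hence $sp$ and $sq$ avoid every segment of $\Pi \subseteq \conv(S)$, and the resulting closed curve is simple.

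The running time is $O(n \log n)$ overall: $\Pi$ is computed in $O(n \log n)$ by Theorem~\ref{paththm}, and determining $\conv(S)$, a valid direction $\mathbf{v}$, and $s$ adds only $O(n \log n)$.

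The main obstacle I anticipate is confirming that the endpoints of $\Pi$ can be arranged as vertices of $\conv(S)$. I expect the specific construction in the proof of Theorem~\ref{paththm} to achieve this naturally, since most sweep- or incremental covering-path constructions begin and end at extreme points of the input; if not, a simple post-processing step (for instance, rerouting the first or last edge of $\Pi$ to terminate at a hull vertex) should preserve the segment count while guaranteeing the required convex-position property. Degenerate configurations (e.g.\ the collinear case, in which $\conv(S)$ is a segment and $\Pi$ is that segment) are handled separately by placing $s$ at an arbitrary point off the line through $S$.
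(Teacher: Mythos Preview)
Your overall strategy---add one Steiner point $s$ and two segments $sp$, $sq$---is exactly what the paper does, but your argument for why the new segments avoid $\Pi$ has a genuine gap. You assert that ``$sp$ and $sq$ avoid every segment of $\Pi \subseteq \conv(S)$,'' but $\Pi$ is \emph{not} contained in $\conv(S)$. The covering path produced in the proof of Theorem~\ref{paththm} (via Lemma~\ref{lenyeg}) uses many Steiner vertices that lie well outside $\conv(S)$: the turning points $q_i$ of the cap-covering arc $\gamma_0$, the points $s_i$ on the upward vertical rays, and the endpoints on the raster lines all sit outside the convex hull of the input points. Consequently, placing $s$ in the exterior of $\conv(S)$ gives no guarantee that $sp$ and $sq$ miss $\Pi$, even if $p$ and $q$ happen to be hull vertices of $S$.

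The paper sidesteps this by working with $\conv(\gamma)$ rather than $\conv(S)$. It observes that, by construction, the two endpoints of $\gamma$ are its \emph{leftmost} and \emph{rightmost} vertices (one lies on the leftmost raster line, the other is the rightmost point of $S$). Hence a single new vertex with sufficiently large $y$-coordinate can be joined to both endpoints by segments that stay strictly above every vertex of $\gamma$ except those two endpoints; no crossings can occur. Your direction-averaging argument would also work if you replaced $\conv(S)$ by $\conv(\Pi)$ throughout and verified that $p,q$ are hull vertices of $\Pi$---which is exactly the ``leftmost/rightmost'' property the paper uses.
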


\paragraph{Covering trees.}
For covering a finite point set in the plane, certain types of geometric
graphs other than paths may also be practical. A noncrossing path or tree,
for example, are equally useful for separating a red and blue set of
points~\cite{FKMU10}, which is one of the motivating problems.
A \emph{covering tree} for a planar point set $S$ is a tree drawn in the plane
with straight-line edges such that every point in $S$ lies at a vertex or
on an edge of the tree. The lower and upper bounds $\lceil n/2\rceil \leq f(n)
\leq n/2 +O(n/\log{n})$ of Theorem~\ref{T1} trivially carry over for the number
of edges of covering trees (with possible edge crossings).

Let $t(n)$ be the minimum integer such that every set of $n$ points in the plane
admits a \emph{noncrossing covering tree} with $t(n)$ straight-line edges.
Since every path is a tree, we have $t(n)\leq g(n)\leq \lceil (1-1/601080391)\rceil$
from Theorem~\ref{paththm}. However, a noncrossing covering tree is significantly
easier to obtain than a noncrossing covering path. By simplifying the proof
of Theorem~\ref{paththm}, we derive a stronger upper bound for covering trees.

\begin{theorem}\label{thm:trees-upper}
Every set of $n$ points in the plane admits a noncrossing covering
tree with at most $\lfloor 5n/6\rfloor$ edges.
Consequently, $t(n) \leq \lfloor 5n/6\rfloor$.
A covering tree with at most $\lfloor 5n/6\rfloor$ edges can be
computed in $O(n\log n)$ time.
\end{theorem}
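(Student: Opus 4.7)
The strategy is to save one edge per six given points using a Steiner-point gadget we call a \emph{Steiner cross}. A Steiner cross on four given points $a_1, b_1, a_2, b_2$ uses a single Steiner point $s$ placed at the intersection of the supporting lines of the pairs $\{a_1,b_1\}$ and $\{a_2,b_2\}$, where the pairing is chosen so that $s$ lies outside both segments $a_1b_1$ and $a_2b_2$; the two edges $sa_1$ (containing $b_1$) and $sa_2$ (containing $b_2$) then cover all four given points. For any four points in convex position, two of the three possible pairings yield such a cross, namely those pairing opposite sides of the convex hull. Since by the Erd\H{o}s--Szekeres theorem ($ES(4)=5$) any six points in general position contain four in convex position, a Steiner cross is always available within each group of six.

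\textbf{Construction.} Sort the $n$ points by $x$-coordinate and partition them into $k = \lfloor n/6\rfloor$ consecutive groups of six lying in disjoint vertical slabs, leaving $r = n \bmod 6 \leq 5$ leftover points. Inside each slab, select four points in convex position, build a Steiner cross on them ($2$ edges, $1$ Steiner point), and attach the remaining two points of the slab by one edge each, producing a noncrossing subtree with $4$ edges covering all $6$ given points. Join consecutive slab subtrees with $k-1$ additional edges between neighboring vertices, and finally attach the $r$ leftover points one by one, using $r$ more edges. The total $4k + (k-1) + r = 5k - 1 + r$ is at most $\lfloor 5n/6\rfloor$ for every $r \in \{0,\ldots,5\}$, as one checks directly. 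Points not in general position only help: a collinear triple replaces a Steiner cross and requires no Steiner point at all.

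\textbf{Main obstacle and complexity.} The chief difficulty is keeping the construction noncrossing: the Steiner point of a slab may lie far outside the slab, so the cross edges can leave the slab and potentially cross edges from neighboring slabs. To handle this, the pairing within each slab is chosen so that $s$ lies in a controlled region (for example, vertically above or below the slab, not to its side), and the attachments of the remaining two points and the slab-joining edges are routed away from the cross edges; a short case analysis on the combinatorial type of each slab's six points suffices. The key simplification over Theorem~\ref{paththm} is that a tree may branch freely at the Steiner point---extra points can be attached directly to $s$ with no ordering constraint---eliminating the most delicate difficulty of the path version. The whole construction runs in $O(n \log n)$ time, dominated by the initial sort; each slab is then processed in constant time, and the joining and leftover edges are added in $O(n)$ additional time.
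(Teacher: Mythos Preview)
Your overall strategy---cover four points with a two-edge ``cross'' at a Steiner point, attach the remaining points of a group as spokes, and chain the groups---is exactly the paper's idea. The gap is in the noncrossing argument.

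You take four points in \emph{convex position} and assert that the pairing can be chosen so that the Steiner point $s$ lies ``vertically above or below the slab, not to its side.'' This is false in general. Label the four hull vertices $p_1,p_2,p_3,p_4$ in increasing $x$-order. If both $p_2,p_3$ lie on the same side of the line $p_1p_4$, the quadruple is a cap or a cup, and then the intersection of lines $p_1p_2$ and $p_3p_4$ indeed has $x$-coordinate strictly between $x(p_1)$ and $x(p_4)$, so $s$ lies in the slab. But if $p_2$ and $p_3$ lie on opposite sides of $p_1p_4$ (a ``zigzag'' convex quadrilateral), then for \emph{both} admissible pairings the two nearly-parallel opposite sides can meet arbitrarily far to the left and to the right of the slab. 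Concretely, take $p_1=(0.05,1)$, $p_2=(0.1,0)$, $p_3=(0.85,0.95)$, $p_4=(0.9,0.05)$ in the slab $[0,1]\times\RR$: one pair of opposite sides is nearly horizontal and meets far to one side, the other pair is nearly vertical and meets far to the other side. Your cross then protrudes into neighbouring slabs and can cross their subtrees; the promised ``short case analysis'' does not save this.

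Nor can you fall back on choosing a better $4$-subset: with groups of six you cannot guarantee a $4$-cap or $4$-cup, since the Erd\H{o}s--Szekeres cap/cup bound is ${4\choose 2}+1=7$, and there are $6$-point sets in which every convex $4$-subset is a zigzag. The paper's proof is essentially your construction with this defect repaired: it takes groups of \emph{seven} (so a $4$-cap or $4$-cup is guaranteed), places the Steiner point at the intersection of the two outer cap/cup edges (hence inside the slab), and uses a single $5$-edge star per group; consecutive groups \emph{share} a boundary point, so no extra joining edges are needed and noncrossing is immediate because each star lies in its own closed vertical strip. The edge count is the same as yours---five edges per six new points---so the bound $\lfloor 5n/6\rfloor$ is recovered with no case analysis.
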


By modifying the lower bound analysis in the proof of Theorem~\ref{thm:path-lower},
we show that the same point set used there yields a slightly weaker lower bound
for noncrossing covering trees.

\begin{theorem}\label{thm:tree-lower}
There exist $n$-element point sets in the plane that
require at least $(9n-4)/17$ edges in any noncrossing
covering tree. Consequently, $t(n) \geq (9n-4)/17$.
\end{theorem}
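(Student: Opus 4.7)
My plan is to reuse the same $n$-point set $S$ constructed in the proof of Theorem~\ref{thm:path-lower}, and to adapt its lower-bound analysis to noncrossing covering trees. Let $T$ be any noncrossing covering tree of $S$ with $e$ edges; the goal is to show $17e \geq 9n - 4$.

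The first step is to isolate where the hypothesis ``covering object is a path'' entered the proof of Theorem~\ref{thm:path-lower}. I expect that argument to be essentially local: $S$ is partitioned into small geometric sub-configurations (``clusters''), and a per-cluster count lower-bounds the number of segments incident to each cluster, with a global correction absorbing the path's two endpoints (which produces the additive $-4$ in $(5n-4)/9$). Since $S$ is locally in general position, each straight-line segment covers at most two points of $S$, and the trivial lower bound is $\lceil n/2\rceil$; the improvement to $(5n-4)/9$ stems from showing that inside each cluster at least one segment must be ``wasted'' covering a single point of $S$, unless a path endpoint falls in the cluster and cancels the waste.

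The second step is to rerun this argument with $T$ in place of the path. The per-cluster geometric count carries over verbatim, as it uses only that $T$ is noncrossing and drawn with straight-line edges. What changes is the global correction: the savings from two path endpoints must be replaced by savings from the leaves of $T$ (which can exceed two) and from branching at internal vertices of $T$ of high degree. After the usual normalization---every leaf is a point of $S$ and every Steiner internal vertex has degree at least three---the handshake identity $\sum_v \deg_T(v) = 2e$ combined with $|V(T)|=e+1$ gives a linear cap on the total savings. Combining this cap with the per-cluster bound and solving the resulting linear inequality yields $17e \geq 9n-4$.

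The main obstacle is making this cap tight. I would need to show, for each cluster type used in $S$, that the local savings from leaves and branching inside the cluster cannot exceed the quota set by the ratio $9/17$. This reduces to a finite case analysis inside each cluster, enumerating the possible noncrossing tree structures there and checking that the number of incident edges remains at least $9/17$ times the cluster size (up to the absorbable boundary term). The gap between $5/9$ and $9/17$ is precisely the extra flexibility a tree enjoys over a path---branching and additional leaves---and confirming that the specific configuration in Theorem~\ref{thm:path-lower} forbids more savings than this is where the bulk of the work lies.
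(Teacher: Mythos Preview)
Your high-level strategy---reuse the point set $S$ from Theorem~\ref{thm:path-lower} and adapt its analysis---matches the paper. But your description of that analysis as a ``per-cluster count'' with a ``finite case analysis inside each cluster'' misreads its structure, and the adaptation you propose (bounding leaves and branching via the handshake identity) does not lead to the constant $9/17$. The path argument is a global \emph{charging scheme} (Lemma~\ref{pp:s01}): each perfect segment is charged to an endpoint of a nearby imperfect segment or to one of the two endpoints of the path, with each such point charged at most twice, yielding $s_2\le 4(s_0+s_1+1)$. There are no clusters and no local case split beyond the four alternatives of Lemma~\ref{pp:nearby}. Your normalization ``every leaf is a point of $S$'' is also incompatible with the setup: after perturbation every point of $S$ lies in the relative interior of an edge, so all vertices of the tree are Steiner and the number of leaves is not a priori controlled---the handshake identity by itself gives only $L\ge 2$, which is useless here.

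The paper's actual adaptation (Lemma~\ref{lem:tree}) hinges on a structural fact you do not mention: since $S$ is in general position, no three perfect edges of the tree share a vertex, so the perfect edges decompose into vertex-disjoint \emph{chains}. Root the tree at an arbitrary vertex $r_0$; then every maximal perfect chain either contains $r_0$ or has a unique outgoing imperfect edge. Now rerun the charging function $\sigma$ from the path proof; whenever $\sigma(pq)$ would land on a leaf (equivalently, on an endpoint of a chain), reroute the charge to that chain's outgoing imperfect edge (or to $r_0$). Each imperfect edge absorbs at most $4$ charges through its two endpoints under rule~(i) and at most $4$ rerouted charges under rule~(ii), giving $s_2\le 8(s_0+s_1)+4$, from which $s_0+s_1+s_2\ge (9n-4)/17$ follows by the same arithmetic as before. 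The chain decomposition plus rooting is the missing idea; without it your plan cannot close.
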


Instead of minimizing the number of edges in a covering tree, one can
try to minimize the number of line segments, where each segment is
either a single  edge or a chain of several collinear edges of the
tree. Let $s(n)$ be the minimum integer
such that every set of $n$ points in the plane admits a
\emph{noncrossing covering tree} with $s(n)$ line segments. By definition,
we trivially have $s(n) \leq t(n)$. In addition, we determine an exact
formula for $s(n)$:

\begin{proposition}\label{prop:tree:s(n)}
We have
$$ s(n)= \begin{cases}
n-1 \text{~~~~~~if } n=2,3,4 \\
\lceil n/2 \rceil \text{~~~~~~if } n \geq 5.
\end{cases}
$$
\end{proposition}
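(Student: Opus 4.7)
The lower bound $s(n) \geq \lceil n/2 \rceil$ holds for every $n \geq 2$ by placing the $n$ points in general position: no three are then collinear, so each line segment covers at most two data points, forcing any covering tree to contain at least $\lceil n/2 \rceil$ segments. This already matches the claimed value $n - 1$ for $n = 2, 3$. For $n = 4$, the stronger bound $s(4) \geq 3$ has to come from a specific four-point configuration, since any $2$-segment covering tree must have the topology of two collinear chains meeting at a single Steiner vertex; a direct case analysis over the three possible ways of pairing the four points into two pair-lines is expected to yield the bound. The upper bounds $s(n) \leq n - 1$ for $n = 2, 3, 4$ are given by any spanning path of the point set.

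For $n \geq 5$ the upper bound $s(n) \leq \lceil n/2 \rceil$ is established constructively. The strategy is to find $\lceil n/2 \rceil$ lines --- each containing at most two of the input points --- and glue them into a connected noncrossing tree via Steiner points at intersections. A natural realisation is a ``spine plus fans'' structure: take a convex-hull edge $AB$ and use the full line $\ell_{AB}$ as the first segment, which already covers $A$ and $B$; the remaining $n - 2$ points all lie in one closed half-plane above $\ell_{AB}$. Pair them into $\lfloor (n-2)/2 \rfloor$ pairs (plus one singleton if $n$ is odd), and use each pair-line, extended until it meets the spine, as one further segment attached at a Steiner point. A singleton is connected by one additional segment from itself to any chosen Steiner point on the spine. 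The resulting tree has $1 + \lceil (n-2)/2 \rceil = \lceil n/2 \rceil$ segments and covers all $n$ points.

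The main obstacle is choosing the pairing so that the resulting tree is noncrossing. Pairing consecutively in the order of the points' perpendicular projections onto the spine handles many configurations, but it can fail when two adjacent pair-lines ``lean toward each other'' and meet inside both of their pair-segments above the spine. To resolve this, the pairing must be adjusted --- for example by a local swap whenever two adjacent pair-lines would cross inside both segments, or by a divide-and-conquer that peels off convex-hull layers and recurses. Making this adjustment always succeed is the most delicate step of the proof; for $n \geq 5$ the extra combinatorial room provides enough freedom to find a crossing-free pairing. Inputs with three or more collinear data points actually help rather than hurt: any such line can be used as one segment covering $k \geq 3$ points, and the argument recurses on the remaining $n - k$ points, attaching the recursive tree at a Steiner point on that line. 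Combining the matching lower and upper bounds in both regimes then yields the claimed formula for $s(n)$.
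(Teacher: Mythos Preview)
Your treatment of the lower bound and of the small cases is fine. The genuine gap is in the upper bound for $n\ge 5$: you set up the ``spine plus fans'' picture, correctly observe that an arbitrary pairing of the $n-2$ points above the spine can produce crossing fan segments, and then stop. Neither of the two fixes you suggest is actually carried out---local swaps can cascade (fixing one crossing may create another), and ``peel off convex-hull layers and recurse'' is only a slogan here. The sentence ``for $n\ge 5$ the extra combinatorial room provides enough freedom to find a crossing-free pairing'' is an assertion, not an argument. As it stands, the inequality $s(n)\le\lceil n/2\rceil$ is not proved.

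The paper avoids the pairing problem altogether by giving up the single spine. Its construction is iterative: $s_1$ is a very long extension of a hull edge of $S$; having built $s_1,\dots,s_{i-1}$, take any hull edge of the still-uncovered point set and extend it until it first meets the tree $s_1\cup\cdots\cup s_{i-1}$, calling the result $s_i$ (with $s_1$ and $s_2$ made long enough that every subsequent line does hit the tree). Because each $s_i$ lies on a supporting line of the remaining points, all still-uncovered points stay on one side of it; inductively they sit inside a single region bounded by the current tree, so the next hull edge lies inside that region and its extension reaches the boundary without crossing anything already built. Each $s_i$ covers at least two new points (it contains a hull edge), so at most $\lceil n/2\rceil$ segments are used. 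The essential difference from your approach is that new segments may attach anywhere on the growing tree, not only on one fixed spine---this is exactly what makes the noncrossing property automatic rather than something that has to be engineered by a clever pairing.
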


\paragraph{Bicolored variants.}
Let $S$ be a bicolored set of $n$ points, with $S=B\cup R$, where $B$ and
$R$ are the set of blue and red points, respectively.

Two covering paths,
$\pi_R$ and $\pi_B$, one for the red and one for the blue points, are
\emph{mutually noncrossing} if each of $\pi_R$ and $\pi_B$ is noncrossing,
and moreover, $\pi_R$ and $\pi_B$ do not cross (intersect) each other.
A natural extension of the monochromatic noncrossing covering path
problem  is:
What is the minimum number $j(n)$ such that every bicolored
set of $n$ points in the plane can be covered by two monochromatic
mutually noncrossing polygonal paths with $j(n)$ segments in total?
Using the construction in the proof of Theorem~\ref{thm:path-lower} we
obtain the following corollary.

\begin{corollary}\label{C1}
Given a bicolored set of $n$ points, there are
two mutually noncrossing covering paths with a total of
at most $3n/2+O(1)$ segments. Such a pair of paths can be
computed in $O(n \log{n})$ time. On the other hand, there exist
bicolored sets that require at least $5n/9 -O(1)$ segments
in any pair of mutually noncrossing covering paths.
Consequently, $5n/9 -O(1) \leq j(n) \leq 3n/2+O(1)$.
\end{corollary}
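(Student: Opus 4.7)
The plan has two halves. For the \emph{lower bound} $j(n)\geq 5n/9-O(1)$, I would take the $(n-1)$-point set $S'$ produced by the construction in the proof of Theorem~\ref{thm:path-lower}, whose every noncrossing covering path has at least $(5(n-1)-4)/9 = 5n/9 - O(1)$ segments. Colour every point of $S'$ red and add a single blue point $q$ placed far outside the convex hull of $S'$. In any mutually noncrossing pair $(\pi_R,\pi_B)$, the path $\pi_R$ is by itself a noncrossing covering path of $S'$ (since $\pi_B$ need not visit any red point, and a fortiori cannot reduce the red workload), so $\pi_R$ already uses at least $5n/9-O(1)$ segments.

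For the \emph{upper bound} $j(n)\leq 3n/2+O(1)$, assume without loss of generality that $|R|\leq|B|$, so $|R|\leq n/2$. The plan is to give the majority colour a trivial $x$-monotone chain and let the minority colour zigzag around it using at most two segments per red point. First, sort the blue points by $x$-coordinate and join consecutive blues by straight segments, producing an $x$-monotone noncrossing blue covering path $\pi_B$ with $|B|-1$ segments. After an infinitesimal perturbation, every red point lies strictly above or strictly below the curve $\pi_B$; let $R^+$ and $R^-$ denote the two resulting subsets.

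Now build $\pi_R$ in three pieces: (i) a zigzag through $R^+$ in $x$-order, with $|R^+|-1$ Steiner vertices on the horizontal line $y=Y$ for $Y$ sufficiently large, giving $2|R^+|-2$ segments all staying strictly above $\pi_B$; (ii) a symmetric zigzag through $R^-$ in reverse $x$-order with Steiners on $y=-Y$, giving $2|R^-|-2$ segments strictly below $\pi_B$; (iii) a three-segment detour wrapping around the right endpoint of $\pi_B$ via two Steiners at $x=X$ for $X$ sufficiently large, one above and one below $\pi_B$. Each zigzag is $x$-monotone with pairwise-disjoint teeth, so it is non-self-crossing, and, provided $X$ and $Y$ are chosen large enough relative to the diameter of the input, every zigzag segment stays on its required side of $\pi_B$ while every detour segment lies outside the $x$-range of $\pi_B$ or at extreme $y$. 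Summing, $\pi_R$ contributes at most $2|R|-1$ segments and $\pi_B$ contributes $|B|-1$, for a total of at most $2|R|+|B|-2 \leq 3n/2-2$. The construction is dominated by sorting and runs in $O(n\log n)$ time.

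The main technical obstacle is verifying that the Steiner positions can always be chosen so that no segment of $\pi_R$ crosses $\pi_B$ and the three pieces of $\pi_R$ do not collide with each other. Since $\pi_B$ is a bounded piecewise-linear curve and the red points lie in a bounded region, a uniform choice of ``$Y$ large enough'' and ``$X$ large enough'' works by continuity, so the step is a routine geometric verification rather than a new combinatorial idea; conceptually, the $3n/2$ bound arises because the minority colour pays at most $2|R|$ segments (one red plus one Steiner per point) while the majority pays $|B|-1$, and $2|R|+|B|\leq 3n/2$ exactly when $|R|\leq n/2$.
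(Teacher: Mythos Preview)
Your upper-bound argument is essentially the paper's own: give the majority colour an $x$-monotone spanning path and let the minority colour zigzag above and below it via distant Steiner points, then join the two zigzags with $O(1)$ extra segments. The paper phrases the zigzag as ``almost vertical rays meeting far above/below $\pi_R$'' rather than Steiners on a fixed line $y=\pm Y$, but this is cosmetic, and the segment count $2\cdot(\text{minority})+(\text{majority})-O(1)\le 3n/2+O(1)$ is the same.

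For the lower bound the two proofs differ. The paper places \emph{two} copies of the Theorem~\ref{thm:path-lower} construction, one red and one blue with $n/2$ points each, arranged so that no three of the combined $n$ points are collinear; each monochromatic path then needs at least $(5(n/2)-4)/9$ segments, and summing gives $5n/9-O(1)$. You instead take a \emph{single} $(n-1)$-point copy, colour it entirely red, and add one stray blue point; the red path alone already forces $5n/9-O(1)$ segments. Both arguments are correct. Yours is a bit more direct (no need to position two copies so their lines stay in general position, and no summing of two half-strength bounds); the paper's version has the mild advantage of a balanced bicolouring, so it shows the bound holds even when both colour classes are linear in $n$.
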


Similarly, two covering trees $\tau_R$ and $\tau_B$, one for the red
and one for the blue points, are \emph{mutually noncrossing} if each
is noncrossing and $\tau_R$ and $\tau_B$ do not cross each other.
The analogous question is in this case:
What is the minimum number $k(n)$ such that every bicolored
set of $n$ points in the plane can be covered by two monochromatic
mutually noncrossing polygonal trees with $k(n)$ edges in total?
The construction in the proof of Theorem~\ref{thm:tree-lower} yields
the following corollary. 
\begin{corollary}\label{C2}
Given a bicolored set of $n$ points, there are two mutually
noncrossing covering trees with a total of at most $n$ edges.
Such a pair of trees can be computed in $O(n \log{n})$ time.
On the other hand, there exist bicolored sets that require at least
$9n/17 -O(1)$ edges in any pair of mutually noncrossing covering trees.
Consequently, $9n/17 -O(1) \leq k(n) \leq n$.
\end{corollary}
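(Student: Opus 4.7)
}
I would handle the two bounds separately with short direct arguments, without invoking the full constructions of Theorems~\ref{thm:trees-upper} or~\ref{thm:tree-lower}.

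For the upper bound $k(n)\le n$, the plan is a simple ``double star'' construction. If one color class is empty the bound is trivial; otherwise, after a generic rotation of the plane we may assume all input points have distinct $x$-coordinates, so that $\delta:=\min\{|x(p)-x(q)|:p\in R,\,q\in B\}>0$. Place two Steiner points $v_R=(0,H)$ and $v_B=(0,-H)$ for a sufficiently large $H>0$, and let $\tau_R$ (resp.\ $\tau_B$) be the star with center $v_R$ (resp.\ $v_B$) whose leaves are the $r$ red (resp.\ $b$ blue) input points. The edge counts $r$ and $b$ sum to $n$, and each star is individually plane because its edges all emanate from a single vertex (a generic position of $v_R,v_B$ excludes collinearities with pairs of same-colored points). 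The only nontrivial point is that $\tau_R$ and $\tau_B$ do not cross each other: the edge $v_Rp_r$ at height $y^\ast$ passes through $x$-coordinate $x_r(H-y^\ast)/(H-y_r)$, which tends to $x_r$ as $H\to\infty$, and symmetrically the blue edge through $x\to x_b$. A red and a blue edge share a $y$-range only on the bounded interval $[y_r,y_b]$, on which the $x$-deviations from $x_r$ and $x_b$ are $O(1/H)$; taking $H$ above an explicit threshold determined by the diameter of $S$ and $\delta$ confines every red edge to a thin vertical strip around $x_r$ and every blue edge to one around $x_b$, and these strips are disjoint because $x_r\neq x_b$. Computing $\delta$ by sorting and then building the two stars takes $O(n\log n)$ time overall.

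For the lower bound $k(n)\ge 9n/17-O(1)$, I would simply recycle Theorem~\ref{thm:tree-lower}. Take its $(n-1)$-point extremal set $S^\ast$, color every point red, and add a single remote blue point to make the instance formally bichromatic. In any pair of mutually noncrossing covering trees $(\tau_R,\tau_B)$ for this coloring, $\tau_R$ is in particular a noncrossing covering tree of $S^\ast$, so by Theorem~\ref{thm:tree-lower} it uses at least $(9(n-1)-4)/17=9n/17-O(1)$ edges; the trivial bound $|E(\tau_B)|\ge 0$ then gives the same total, as claimed.

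The conceptual obstacle is the mutual-noncrossing check for the two stars in the upper bound. It reduces to the elementary $O(1/H)$ estimate sketched above once degeneracies are eliminated by the preprocessing rotation, but the bookkeeping---keeping each edge within a vertical strip of width less than $\delta$ for \emph{every} overlapping pair of red and blue edges simultaneously---has to be carried out carefully to pin down a single explicit $H$ that works for all pairs.
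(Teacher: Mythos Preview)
Your proposal is correct. For the upper bound you use exactly the same double-star construction as the paper: one star centered far above covering one color class, one star centered far below covering the other, with mutually distinct $x$-coordinates ensuring the two stars are mutually noncrossing for a sufficiently extreme choice of centers. The paper states this in one sentence (``the distinct $x$-coordinates of the points suffice to guarantee that the two stars are mutually non-crossing for suitable center positions''); your $O(1/H)$ strip argument simply spells out the same reasoning.

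For the lower bound you take a genuinely different, and slightly slicker, route. The paper places \emph{two} copies of the extremal set from Theorem~\ref{thm:tree-lower}, one colored red and one colored blue, each of size $n/2$, and sums the monochromatic lower bounds. You instead take a \emph{single} copy of size $n-1$, color it entirely red, and throw in one blue point so that the instance is formally bicolored; the red tree alone then already carries the whole $9n/17-O(1)$ charge. Your version avoids the (minor) nuisance of positioning two copies in mutual general position, at the cost of producing a rather degenerate bichromatic instance. Both arguments are equally valid and yield the same asymptotic constant.
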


\paragraph{Computational complexity.}
We establish an $\Omega(n \log{n})$ lower bound for computing a noncrossing
covering path for a set of $n$ points in the plane.

\begin{theorem}\label{T3}
The sorting problem for $n$ numbers is linear-time reducible to the problem of
computing a noncrossing covering path for $n$ points in the plane.
Therefore, computing a noncrossing covering path for a set of
$n$ points in the plane requires $\Omega(n \log{n})$ time in the worst
case in the algebraic decision tree model of computation.
\end{theorem}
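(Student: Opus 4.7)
The plan is a standard linear-time reduction from sorting. Given $n$ real numbers $x_1,\ldots,x_n$, I construct in $O(n)$ time the point set $P=\{p_i=(x_i,x_i^2): 1\le i\le n\}$ lying on the parabola $y=x^2$. Because the parabola is strictly convex, the points of $P$ are in convex position with no three collinear, so every line, and in particular every segment of a covering path, contains at most two points of $P$. I then invoke the hypothetical noncrossing covering-path algorithm on $P$ to obtain a path $\pi$ in time $T(n)$.

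To close the reduction, I extract the $x$-sorted permutation of $x_1,\ldots,x_n$ from $\pi$ in $O(n)$ additional time. The key observation is that the convex hull of $P$ is precisely the set of input points arranged in $x$-sorted order along the parabola (closed by a single upper-hull edge between the leftmost and rightmost point), so recovering the sorted order is equivalent to recovering the convex-hull cyclic order of $P$. I would first traverse $\pi$, using the incidence annotation in the algorithm's output (which must indicate, for each segment, the input points lying on it), to obtain the visit sequence $q_1,q_2,\ldots,q_n$ of points of $P$ along $\pi$ in $O(n)$ time. Then a deque-based scan inserts each $q_k$ at the front or back of the deque by comparing its $x$-coordinate with the two current endpoints; at termination the deque lists $P$ in $x$-sorted order. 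Combined with the construction, this yields a sorting algorithm in $O(T(n)+n)$ total time, and Ben-Or's $\Omega(n\log n)$ lower bound for sorting in the algebraic decision tree model forces $T(n)=\Omega(n\log n)$.

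The main technical obstacle is justifying the deque-based reconstruction. For a noncrossing Hamiltonian path on points in convex position, a classical inductive argument shows that the visited prefix $\{q_1,\ldots,q_k\}$ always forms a contiguous arc of the convex hull, which immediately licenses the deque maintenance: the new point $q_k$ must be cyclically adjacent to one of the two current arc endpoints, or else a future edge would be forced to cross a past edge. For covering paths this arc property can in principle fail, because a Steiner detour may let $\pi$ leap over unvisited convex-hull points. I would address this either by (i) preprocessing $\pi$ in $O(n)$ time to shortcut its Steiner detours into an equivalent noncrossing spanning path on $P$ visiting the same sequence $q_1,\ldots,q_n$, using that $\pi$ is simple and $P$ is in convex position, or (ii) replacing the deque scan with a slightly more intricate pairing argument that tracks both endpoints of each segment of $\pi$ incident to two input points, such segments being noncrossing chords of $\conv(P)$ whose adjacencies still encode the convex-hull cyclic order.
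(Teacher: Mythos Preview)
Your parabola reduction and the overall framework are fine; the gap is entirely in the extraction step. The arc property underlying your deque scan is false for covering paths once Steiner vertices are allowed. Concretely, take $p_1=(0,0)$, $p_2=(1,1)$, $p_3=(2,4)$, $p_4=(3,9)$ on the parabola: a noncrossing polygonal path with a handful of Steiner vertices can visit them in the order $p_1,p_3,p_2,p_4$ (pass through $p_1$, detour high above to come down through $p_3$, drop to $p_2$, then swing below and around the right side to reach $p_4$; explicit coordinates are easy to write out). After $p_1,p_3$ your deque holds $x$-values $[0,2]$, and the next point $p_2$ has $x=1$ strictly between the endpoints, so the insertion rule breaks. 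Fix~(i) cannot help: the straight-line spanning path $p_1p_3p_2p_4$ contains the two diagonals $p_1p_3$ and $p_2p_4$ of a convex quadrilateral, which cross, so there is no noncrossing spanning path with that visit order. Fix~(ii) also fails: in the path just described every segment carries at most one input point, so your set of ``two-point chords'' is empty and encodes nothing; in general nothing forces a covering path to contain any segment through two input points. (The incidence-annotation assumption is also nonstandard---the output of the covering-path problem is just the list of path vertices.)

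The paper sidesteps the visit order entirely. Given the path $\gamma$ with $m$ vertices, it computes $\partial\conv(\gamma)$ in $O(m)$ time by Melkman's algorithm, triangulates the planar graph $\gamma\cup\partial\conv(\gamma)$ in $O(m)$ time via Chazelle's linear-time triangulation, and then traces the parabola $y=x^2$ through the triangulation in $O(1)$ time per triangle. Since every point of $S$ lies on both the parabola and an edge of $\gamma$, the trace encounters the points of $S$ in left-to-right order regardless of how $\gamma$ happened to visit them, closing the reduction without any combinatorial hypothesis on the visit sequence.
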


On the other hand, a noncrossing covering tree for $n$ points can be
easily computed in $O(n)$ time; see also Section~\ref{sec:conclusion}.

\paragraph{Related previous results.}
Given a set of $n$ points in the plane, the {\sc minimum-link covering path}
problem asks for a covering path with the smallest number of segments (links).
Arkin~\etal~\cite{AMP03} proved that (the decision version of)
this problem is NP-complete. Stein and Wagner~\cite{SW01} gave a
$O(\log z)$-approximation where $z$ is the maximum number of collinear
points.

Various upper and lower bounds on the minimum
number of links needed in an axis-aligned path traversing
an $n$-element point set in $\RR^d$ have been
obtained in~\cite{BBD+08,CM98,C04,KKM94}.
Approximation algorithms with constant ratio
(depending on the dimension $d$) for this problem are developed
in~\cite{BBD+08}, while some NP-hardness results have been
claimed in~\cite{EHS10}, and further revised in~\cite{J12}.
Other variants of Euclidean TSP can be found in a survey article by
Mitchell~\cite{Mi00}.

\section{Covering Paths with Possible Self-Crossings}
\label{sec:cover1}

A set $X$ of $k$ points in general position in the plane, no two on a
vertical line, is a {\em $k$-cap} ({\em $k$-cup}, respectively) if
$X$ is in (strictly) convex position and all points of $X$ lie above
(below, respectively) the line connecting the leftmost and the
rightmost point of $X$.
Similarly, caps and cups can be defined for arbitrary points (with
allowed collinearities), with $X$ being in \emph{weakly} convex position.
By slightly abusing notation, we use the same terminology when
referring to them, and distinguish them based on the underlying point
sets.

According to a classical result of Erd\H{o}s and Szekeres~\cite{ES35},
every set of at least ${2k-4 \choose k-2}+1$ points in general position
in the plane, no two on a vertical line, contains a $k$-cap or a $k$-cup.
In particular, every such set contains $k$ points in convex position;
see also~\cite{ES60,Ma02}. They also showed that this bound is the
best possible, \ie, there exist sets of ${2k-4 \choose k-2}$ points
containing no $k$-cup or $k$-cap. More generally, there exist sets
of ${k+l-4 \choose k-2}$ points containing neither $k$-cups nor $l$-caps.
While Erd\H{o}s and Szekeres originally proved the above results for
points in general position, their arguments go though verbatim
for arbitrary point sets (with allowed collinearities), and the same
quantitative bounds hold for the resulting caps or cups
(now in weakly convex position).

Following the terminology coined by Welzl~\cite{We11}, a set $S$ of $n$ points in the
plane is called {\em perfect} if it can be covered by a (possibly self-crossing)
polygonal path consisting of at most $\lceil n/2 \rceil$ segments. It
is easy to see that a cup or a cap is perfect: indeed, a suitable
covering path can be obtained by extending the odd numbered edges of
the $x$-monotone polygonal chain connecting the points (since no two points
lie on a vertical line, any consecutive pair of these edges properly intersect).

\paragraph{Proof of Theorem~\ref{T1}.}
Let $S$ be a set of $n$ points in the plane, no three of which are collinear.
Choose an orthogonal coordinate system such that no two points have
the same $x$-coordinate. By the result of Erd\H{o}s and Szekeres~\cite{ES35},
every $m$-element subset of $S$ contains a $k$-cup or a $k$-cap for some
$k=\Omega(\log{m})$. Since every such subset is perfect, it can be
covered by a path of $\lceil k/2 \rceil$ segments.

To construct a covering path, we partition $S$ into caps and cups of size
$\Omega(\log n)$ each, and a set of less than $n/\log n$ ``leftover'' points.
Set $T=S$. While $|T|\geq n/\log n$, repeatedly find a maximum-size cup or cap
in $T$ and delete those elements from $T$. Note that $\log(n/\log{n}) =
\Omega(\log{n})$, and we have found a $k$-cup or $k$-cap for some
$k=\Omega(\log n)$ in each step. Therefore, we have found $O(n/\log n)$
pairwise disjoint caps and cups in $S$, and we are left with a set $T$
of less than $n/\log n$ points.

For each $k$-cup (or $k$-cap), construct a covering sub-path with $\lceil k/2\rceil$
segments. Link these paths arbitrarily into one path, that is, append them one
after another in any order. Finally append to this path an arbitrary
spanning path of the remaining less than $n/\log{n}$ points in $T$,
with one point per turn.

A covering path for $S$ is obtained in this way.
The total number of segments in this path is $n/2 + O(n/\log{n})$, as required.
Chv\'atal and Klincsek~\cite{CK80} showed that a maximum-size cap (and cup)
in a set of $n$ points in the plane, no 3 of which are collinear, can
be found in $O(n^3)$ time. With $O(n/\log n)$ calls to their algorithm,
a covering path with $n/2 +O(n/\log n)$ segments can be
constructed in $O(n^4/\log n)$ time in the RAM model of computation.
Now if the problem can be solved in time $O(n^4/\log n)$, it can also be
solved in time $O(n^{1+\eps})$ for any $\eps>0$:
arbitrarily partition the points into $n^{1 -\eps/3}$ subsets of
$n^{\eps/3}$ points each, solve each subset separately,
move the leftover points to the next subset, then
link the paths together with one extra segment per path.
\qed

\section{Non-Crossing Covering Paths: Upper Bound}
\label{sec:noncrossing-up}

It is easy to see that for every set of $n$ points in the plane, there
is a noncrossing covering path with at most $n-1$ segments.
For example, an $x$-monotone spanning path for $n$ points has $n-1$
segments, no two crossing edges, and no Steiner points either.
In this section, we prove Theorem~\ref{paththm} and show that $(1-\eps)n$
segments suffice for some small constant $\eps>0$. In the proof
of Theorem~\ref{paththm}, however, we still use the trivial upper
bound $n-1$ for several subsets of points with the additional
constraint that the two endpoints of the path are two given points
on the boundary of a convex region containing the points in its
interior (Lemma~\ref{trivi}).

\begin{lemma}\label{trivi}
Let $X$ be a set of $n$ points in the interior of a convex region $C$,
and let $a,b$ be two points on the boundary $\partial C$ of $C$.
Then $X\cup \{a,b\}$ admits a noncrossing covering path with
$|X|+1$ segments such that its two endpoints are $a$ and $b$, and its
relative interior lies in the interior of $C$. Such a covering path
can be constructed in $O(n\log n)$ time.
\end{lemma}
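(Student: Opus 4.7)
My plan is to prove the lemma by induction on $n = |X|$, peeling off one hull vertex adjacent to $a$ at each step.

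For the base case $n = 0$, the single segment $a \to b$ covers $X = \emptyset$ with $|X| + 1 = 1$ segment and lies in $C$ by convexity; its relative interior lies in $\text{int}(C)$, except in the degenerate case where $ab$ is contained in a flat piece of $\partial C$, which can be dealt with by a tiny perturbation along $\partial C$.

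For the inductive step ($n \ge 1$), I would first observe that $a$ must be a vertex of $H := \conv(X \cup \{a, b\})$: otherwise $a$ would be a nontrivial convex combination of $X \cup \{b\}$ with positive weight on some $x \in X \subset \text{int}(C)$, forcing $a \in \text{int}(C)$ and contradicting $a \in \partial C$. The same argument gives that $b$ is a vertex of $H$. When $H$ is a proper polygon, $a$ has two distinct neighbors on $\partial H$, and since at most one of them equals $b$, I can pick $x \in X$ adjacent to $a$ on $\partial H$. Because $ax$ is a hull edge, the line $\ell$ through $a$ and $x$ has the rest of $(X \setminus \{x\}) \cup \{b\}$ in one closed half-plane $H^+$. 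Setting $C' := C \cap H^+$ gives a convex region with $x, b \in \partial C'$ and $X \setminus \{x\}$ in its interior.

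Applying the induction hypothesis to $(X \setminus \{x\}, C', x, b)$ yields a noncrossing covering path $P'$ from $x$ to $b$ with $n$ segments and relative interior in $\text{int}(C') \subset \text{int}(C)$. Prepending segment $ax$ produces the target path $P$ with $n + 1$ segments from $a$ to $b$, covering $X$. The noncrossing property is preserved because the relative interior of $P'$ lies strictly on one side of $\ell$ while segment $ax$ lies on $\ell$, so the two meet only at their shared endpoint $x$. The relative interior of $P$ lies in $\text{int}(C)$: the open segment $(a, x)$ does so by convexity (since $x \in \text{int}(C)$), the relative interior of $P'$ does so by induction, and $x \in \text{int}(C)$. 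Any points of $X$ collinear with and between $a$ and $x$ are covered by segment $ax$ itself and are dropped from the recursion, keeping the segment count at most $n + 1$. The fully degenerate case, in which $H$ collapses to the segment $ab$ and all points of $X$ lie on it, is handled separately by taking the sorted collinear path $a, x_1, \ldots, x_n, b$ along that segment.

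For the $O(n \log n)$ running time, I would precompute $H$ in $O(n \log n)$ and then maintain a semi-dynamic convex hull that supports vertex deletions in amortized $O(\log n)$ time; each of the $n$ inductive steps then performs one vertex deletion and an $O(1)$ neighbor lookup. The main obstacle will be cleanly managing the degenerate configurations — three collinear points among the hull vertices or $ab$ coinciding with a flat piece of $\partial C$ — but these require only local perturbations or case analysis and do not affect the $n + 1$ segment-count bound.
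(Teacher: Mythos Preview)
Your argument is correct but takes a genuinely different route from the paper. The paper's proof is a one-shot construction: it takes tangent lines $\ell_a,\ell_b$ to $C$ at $a$ and $b$, lets $O=\ell_a\cap\ell_b$ (or a point at infinity), sorts $X$ by the angular order in which a line rotating about $O$ from $\ell_a$ to $\ell_b$ meets the points, and connects $a$, the sorted list, and $b$ in that order. Noncrossing follows because consecutive edges lie in interior-disjoint wedges with apex $O$, and the $O(n\log n)$ bound is immediate from the single sort. Your approach instead peels off a convex-hull neighbor of the current endpoint and recurses inside a half-plane, which is a valid and natural alternative; the two constructions generally produce different paths. The trade-off is that the paper's method is shorter and avoids case analysis and dynamic data structures, while your inductive argument makes the noncrossing invariant very explicit (each new edge lies on the boundary of the region containing the remaining path). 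One small presentational wrinkle: when you write ``apply the induction hypothesis to $(X\setminus\{x\},C',x,b)$'' you should really be recursing on $X$ minus \emph{all} points on the segment $ax$, since those points lie on $\partial C'$ rather than in $\mathrm{int}(C')$; you note this later, but it would be cleaner to state it up front so the hypothesis applies verbatim.
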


We include the easy proof for completeness (a similar lemma was also
an essential tool in \cite{FKMU10}).

\begin{proof}
Let $\ell_a$ and $\ell_b$ be tangent lines at $C$ incident to $a$ and
$b$, respectively. If $\ell_a$ and $\ell_b$ are not parallel, then
let $O=\ell_a\cap \ell_b$; otherwise let $O$ be a point at
infinity corresponding to the direction of the two parallel lines
$\ell_a$ and $\ell_b$. Sort the points in $X$ in the order in which
they are encountered by a rotating sweep line from $\ell_a$ to
$\ell_b$ around $O$ (with ties broken arbitrarily). Let $\gamma$ be
the polygonal path that starts at $a$, visits the points in $X$ in the
above sweep order, and ends at $b$. The edges of $\gamma$ are pairwise
noncrossing, since they lie in interior-disjoint wedges centered at $O$.
By construction, $\gamma$ lies in the convex hull of
$X\cup \{a,b\}\subseteq {\rm int}(C)\cup \{a,b\}$, hence the relative
interior of $\gamma$ lies in the interior of $C$, as required.
\end{proof}

Before proving Theorem~\ref{paththm}, we show how to reduce the trivial bound $n-1$
on the size of noncrossing covering paths by an arbitrarily large constant,
provided that the number of points $n$ is sufficiently large.

\begin{lemma}\label{lenyeg}
Let $S$ be a set of $n$ points in the plane that contains a cap or cup
of even size $k$. Then $S$ admits a noncrossing covering path $\gamma$
with at most $n+1-\lfloor k/6\rfloor$ segments.
Furthermore, if $S$ lies in the interior of a vertical strip $H$
bounded by two vertical lines,  $h_1$ and $h_2$,
then we may require that the two endpoints of $\gamma$ lie on $h_1$ and $h_2$,
respectively, and the relative interior of $\gamma$ lie in the interior of $H$.
\end{lemma}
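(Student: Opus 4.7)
The plan is to take the trivial $(n+1)$-segment sweep of Lemma~\ref{trivi} and modify it locally so that each block of six consecutive cup points gives up one segment, for a total saving of $\lfloor k/6\rfloor$. WLOG the cap-or-cup is a cup (the cap case is symmetric under reflection); label its points in $x$-order as $p_1,\ldots,p_k$ and partition them into $\lfloor k/6\rfloor$ consecutive blocks $B_j=\{p_{6j-5},\ldots,p_{6j}\}$ of six points each, with any leftover cup points treated as non-cup points of $S$ in what follows.

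For each block $B_j$, I designate the middle cup edge $p_{6j-3}p_{6j-2}$ as the \emph{active edge} and insist that the final covering path traverses both $p_{6j-3}$ and $p_{6j-2}$ in the interior of a single straight segment supported on the line $\ell_j$ through the active edge. To enforce this, I partition $H$ into $\lfloor k/6\rfloor+1$ vertical sub-slabs separated by vertical lines placed between consecutive active edges (with $h_1,h_2$ as outermost boundaries), and for each sub-slab I invoke Lemma~\ref{trivi} with its convex region chosen so that the tangent lines at the two boundary anchors coincide with the active edges of the adjacent blocks; by strict convex position of the cup, each $\ell_j$ is a supporting line of $B_j$ at $p_{6j-3}$ and $p_{6j-2}$, so such a convex region exists and the corresponding Lemma~\ref{trivi} sweep begins (or ends) on a radial line aligned with $\ell_j$. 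With this alignment, the last segment of one sub-slab's sweep and the first segment of the next sub-slab's sweep are both collinear with $\ell_j$ and merge with the active-edge segment into a single long segment of the concatenated path that covers $p_{6j-3}$ and $p_{6j-2}$ as interior points; each such merge saves one segment, so across the $\lfloor k/6\rfloor$ active edges the total saving is $\lfloor k/6\rfloor$, yielding at most $n+1-\lfloor k/6\rfloor$ segments in the resulting noncrossing covering path from $h_1$ to $h_2$.

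The main obstacle will be verifying the alignment of Lemma~\ref{trivi}'s sweep with the active-edge lines $\ell_j$: the convex region used in each sub-slab must enclose all points of $S$ in that sub-slab while having its boundary locally tangent to $\ell_j$ at the anchor, which is where the cup's strict convexity and the buffer of two non-active cup points on each side of the active edge come into play. A secondary technical step is confirming that no other point of $S$ accidentally lies on $\ell_j$ (a generic condition, easily enforced by an infinitesimal perturbation of the anchor positions) and that the full concatenated path is noncrossing inside $H$. I expect both steps to reduce to standard geometric arguments once the active-edge alignment setup is in place; the relative interior of $\gamma$ lies in $H$ because each sub-slab's sweep does, and the endpoint conditions on $h_1, h_2$ hold by construction.
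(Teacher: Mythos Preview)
Your plan has two genuine gaps that block the argument.

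First, the line $\ell_j$ through the active edge $p_{6j-3}p_{6j-2}$ is a supporting line only of the \emph{cup} $B_j$, not of $S$. Arbitrary points of $S$ in your sub-slab may lie on either side of $\ell_j$, so there is in general no convex region $C_j$ that (i) contains every point of $S$ in the sub-slab and (ii) has $\ell_j$ as a tangent at the anchor. Your sentence ``by strict convex position of the cup, each $\ell_j$ is a supporting line of $B_j$'' is correct but irrelevant: Lemma~\ref{trivi} needs $\ell_j$ to support $C_j$, and $C_j$ must enclose all of $S$ in that slab, not just $B_j$. This is precisely the obstruction the paper works around by first running the cap as a convex polygonal arc $\gamma_0$ that \emph{separates} the strip into a convex region below and a nonconvex region above, and treating the two sides by different mechanisms.

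Second, even if $\ell_j$ were tangent to $C_j$ at the anchor $a$, the Lemma~\ref{trivi} path does \emph{not} have its first (or last) segment along $\ell_j$. That path is $(a,x_1,\ldots,x_m,b)$ with the $x_i$ in sweep order; its first segment is $ax_1$, where $x_1\in{\rm int}(C_j)$ lies strictly off the tangent line $\ell_j$. So ``the sweep begins on a radial line aligned with $\ell_j$'' is true of the sweep, but the resulting edge $ax_1$ is not collinear with $\ell_j$, and no merge occurs. Consequently the $2\lfloor k/6\rfloor$ active-edge points are not covered by the concatenated path, and the segment count does not drop.

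For comparison, the paper's proof earns its $\lfloor k/6\rfloor$ savings by a different, two-case local modification at every third vertex $q_i$ of the zigzag $\gamma_0$: either a wedge above $q_i$ contains a point of $S$, in which case a Steiner point can be replaced by that real point; or it is empty, in which case a short-cut along a convex-hull edge eliminates one vertex of $\gamma_0$ and two convex cells above it. Either way one segment is saved, and the case split is exactly what handles points of $S$ lying above the cap chord, which your scheme does not address.
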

\begin{proof}
Let $H$, $S\subset H$, be a vertical strip bounded by two vertical lines,
$h_1$ and $h_2$, from the left and right, respectively. Assume that $S$
contains a \emph{cap} of size $k$ (the case of a \emph{cup} is analogous).
We construct a noncrossing covering path $\gamma$ with at most
$n+1-\lfloor k/6\rfloor$ segments. We may assume that $k$ is a multiple of 6
(by decreasing $k$, if necessary, without changing the value of
$\lfloor k/6\rfloor$).

Let $P=\{p_1,p_2,\ldots , p_k\}\subset S$ be a cap of size of $k$,
labeled in increasing order of $x$-coordinates. Note that $P$ admits a
covering path $\gamma_0=(q_1,q_2,\ldots,q_{k/2+1})$ of $k/2$ segments,
where each segment of $\gamma_0$ contains two consecutive points of
the cap. We may assume (by extending or shortening $\gamma_0$ if
necessary) that the endpoints of $\gamma_0$ are on the boundary of the
vertical strip $H$, that is, $q_1\in h_1$ and $q_{k/2+1}\in h_2$.
Let $q_0\in h_1$ and $q_{k/2+2}\in h_2$ be arbitrary points vertically
below the endpoints of $\gamma_0$. With this notation, the polygonal
path $(q_0q_1)\cup \gamma_0\cup (q_{k/2+1}q_{k/2+2})$ is a convex arc.
Let $s_{k/2+1}\in h_2$ be an arbitrary Steiner point above the right endpoint
of $\gamma_0$. The two endpoints of our final covering path for $S$
will be $q_0 \in h_1$ and $s_{k/2+1} \in h_2$.

In the remainder of the proof, we first construct a noncrossing
covering path for $S$ with $n+1$ segments from $q_0$ to $s_{k/2+1}$
and then modify this construction to ``save'' $\lfloor k/6\rfloor$ segments.

\begin{figure}[htbp]
\centering
    \includegraphics[width=0.8\textwidth]{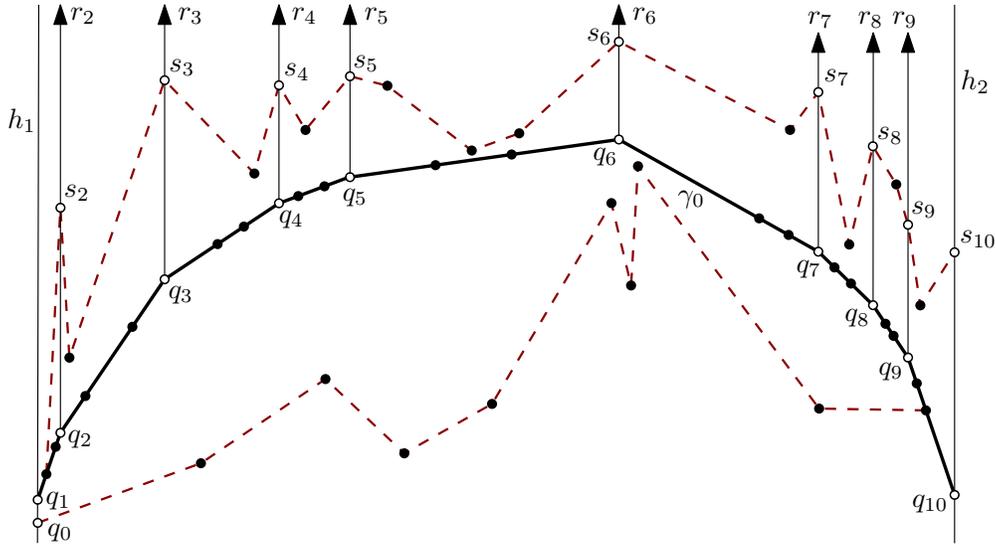}
        \caption{A set with $n=36$ points.
          A covering path $\gamma_0$ for a cap of $k=18$ points
          (bold). The region above $\gamma_0$ is decomposed into 9
          convex regions by vertical rays $r_i$, $i=2,\ldots,9$,
          each passing through a Steiner point $s_i$. We obtain a
          covering path with $n+1=37$ segments for all points by
          concatenating three covering paths.}
        \label{fig:eps1}
\end{figure}

\paragraph{Preliminary approach.}
The path $\gamma_0$ decomposes the vertical strip $H$ into two
regions: a convex region below and a nonconvex region above, with
$k/2-1$ reflex vertices at $q_2,\ldots , q_{k/2}$. Decompose the
nonconvex region above $\gamma_0$ into $k/2$ convex regions by
upward vertical rays $r_i$ emitted by $q_i$, for $i=2,\ldots,q_{k/2}$;
and place an arbitrary Steiner point $s_i$ on the ray $r_i$.

We construct a noncrossing covering path for $S$ as a concatenation of
the following three paths: The first path is a covering path
for the points in $S$ lying strictly below $\gamma_0$,
from the Steiner point $q_0\in h_1$ to $p_k \in \gamma_0$,
obtained by Lemma~\ref{trivi}.
The second path is the part of $\gamma_0$ from $p_k$ to $p_1$. The
third path consists of $k/2$ covering subpaths
for the points in $S$ lying above $\gamma_0$;
the first of these subpaths runs from $p_1$ to $s_2$, and
the others run from $s_i$ to $s_{i+1}$, for $i=1,\ldots,k/2$,
as obtained by Lemma~\ref{trivi}.

The resulting path visits all points of $S$, since the second part
visits all points along $\gamma_0$,
and the convex regions jointly contain all points below or above
$\gamma_0$. The $k/2+2$ parts of the
covering path are pairwise noncrossing, since they lie either on
$\gamma_0$ or in pairwise interior-disjoint regions whose interiors
are also disjoint from $\gamma_0$. The second part of
the covering path (the part along $\gamma_0$) covers $k$ points with
$k/2$ segments, and the remaining
$k/2+1$ parts each require one more segment than the number of points
covered. Hence, the total number of segments is $n+1-(k/2)+(k/2+1)=n+1$.

\paragraph{Modified construction.}
We now modify the above construction and ``save'' $\lfloor k/6\rfloor$ segments.
The savings come from the following two ideas. (1) It is not necessary
to decompose the entire region above $\gamma_0$ into convex pieces.
If a region above $\gamma_0$ contains all points in $S$ above $\gamma_0$,
and has fewer than $k/2$ reflex vertices, then we can decompose this
region into fewer than $k/2$ convex pieces, using fewer than $k/2$
Steiner points and thus reducing the size of the resulting covering path.
(2) If a ray emitted by a reflex vertex passes through a point of $S$
and decomposes the reflex angle into two convex angles, then a Steiner
point can be replaced by a point of $S$, which saves one segment in
the resulting covering path.

\begin{figure}[htbp]
\centering
    \includegraphics[width=0.8\textwidth]{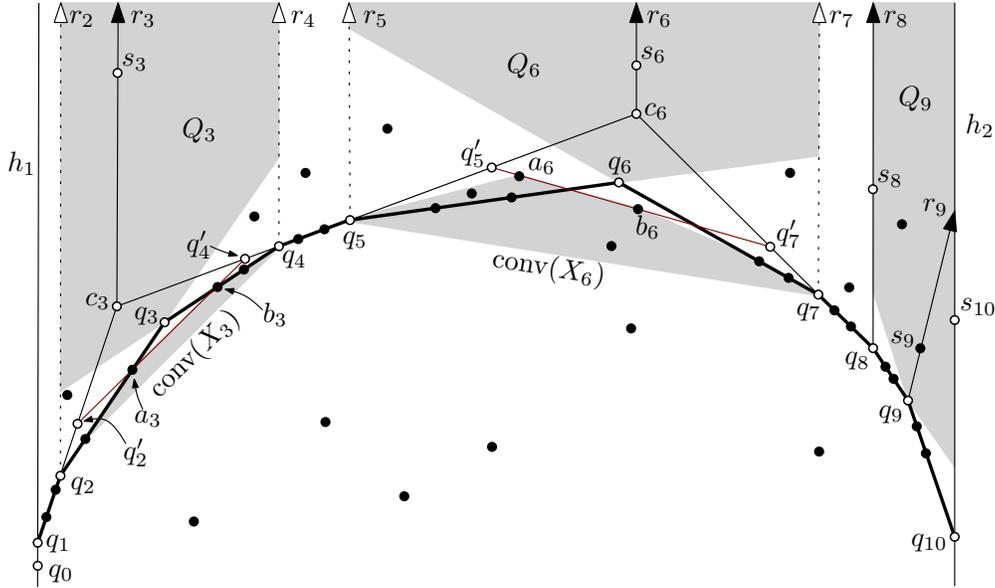}
        \caption{Local modifications for $\gamma_0$.
        Case~1: $S\cap Q_9\neq \emptyset$, we choose a ray $r_9$
        emitted by $q_9$ that passes through a point in
        $S\cap Q_9$. Case~2: $S\cap Q_3=\emptyset$, we construct ${\rm
          conv}(X_3)$, and replace the arc $(q_2,q_3,q_4)\subset
        \gamma_0$  by $(q_2,q_2',q_4',q_4)\subset \gamma_1$.
        Similarly, $S\cap Q_6=\emptyset$, we construct ${\rm
          conv}(X_6)$, and replace the arc $(q_5,q_6,q_7)\subset
        \gamma_0$  by $(q_5,q_5',q_7',q_7)\subset \gamma_1$. Note that the
         triangles $\Delta(c_3,q_2',q_4')$ and $\Delta(c_6,q_5',q_7')$
         contain no points from $S$. }
        \label{fig:eps2}
\end{figure}

We show that one of the two ideas is always applicable locally. Specifically,
we modify $\gamma_0$ by replacing some of the arcs $(q_{i-1},q_i,q_{i+1})\subset \gamma_0$,
where $i$ is a multiple of 3, by different arcs and obtain a new convex polygonal arc $\gamma_1$. 
The path $\gamma_1$ retains the property that every segment contains two
points from $S$, however it may consist of fewer segments than $\gamma_0$.
We keep the modifications ``local'' in the sense that a modification
in the neighborhood of a vertex $q_i$, where $i>0$ is a multiple of 3,
is carried out independently of modifications at all other vertices $q_j$,
where $j>0$ is a multiple of 3. Even though $\gamma_0$ may be modified in the neighborhoods 
of vertices $q_i$ and $q_{i+3}$, where $i>0$ is a multiple of 3, the intermediate edge $q_{i+1}q_{i+2}$ of $\gamma_0$ will preserved: it will be either an edge of $\gamma_1$ or contained in a longer edge of $\gamma_1$.

For every $i$, where $i$ is a positive multiple of 3, let $Q_i$ be the region above
\emph{both} lines $q_{i-1}q_i$ and $q_iq_{i+1}$, and between the vertical
rays $r_{i-1}$ and $r_{i+1}$. We distinguish two cases (refer to Fig.~\ref{fig:eps2}):

\paragraph{Case~1: $S\cap Q_i\neq \emptyset$.}
In this case, the polygonal arc $(q_{i-1},q_i,q_{i+1})$ of $\gamma_0$
is not modified but the ray $r_i$ is defined differently.
In addition, the Steiner points $s_{i-1}\in r_{i-1}$ and
$s_{i+1}\in r_{i+1}$ are defined differently.
Pick an arbitrary point $s_i\in S\cap Q_i$, and let $r_i$ be the ray emitted
by $q_i$ and passing through $s_i$. Let $r_{i-1}$ and $r_{i+1}$
be vertical rays emitted by $q_{i-1}$ and $q_{i+1}$,
respectively, like before. Decompose the region above
$(q_{i-1},q_i,q_{i+1})$ by the two vertical rays $r_{i-1}$ and
$r_{i+1}$, and then by the (possibly nonvertical) ray $r_i$.
Choose Steiner points $s_{i-1}\in r_{i-1}$ and $s_{i+1}\in r_{i+1}$,
such that they each lie on the common boundary of two consecutive
regions of the decomposition.

\paragraph{Case~2: $S\cap Q_i= \emptyset$.} In this case we modify
the polygonal arc $(q_{i-1},q_i,q_{i+1})$ of $\gamma_0$.
Let $c_i$ be the intersection point of lines $q_{i-2}q_{i-1}$ and
$q_{i+1}q_{i+2}$ (possibly $q_{i-2}=q_0$ or  $q_{i+2}=q_{k/2+2}$).
Let $S_i$ denote the set of points of $S$ in the interior of the
triangle $\Delta(q_{i-1}c_iq_{i+1})$; and let $X_i=S_i\cup \{q_{i-1},q_{i+1}\}$.
Since $Q_i$ is empty and each of the segments $q_{i-1}q_i$ and $q_iq_{i+1}$
contains two points of $S$, the convex hull of $X_i$ has at least four
vertices, \ie, $\conv(X_i)$ is not a triangle.
Let $a_ib_i$ be an arbitrary edge of $\conv(X_i)$, that is incident to
neither $q_{i-1}$ nor $q_{i+1}$. The line $a_ib_i$ intersects the sides $q_{i-1}c_i$
and $q_{i+1}c_i$ of the triangle $\Delta(q_{i-1}c_iq_{i+1})$. Denote the
intersection points by $q_{i-1}'\in q_{i-1}c_i$ and $q_{i+1}'\in q_{i+1}c_i$.
Replace $(q_{i-1},q_i,q_{i+1})$ by $(q_{i-1},q_{i-1}',q_{i+1}',q_{i+1})$
to obtain $\gamma_1$.

The segment $q_{i-1}'q_{i+1}'$ contains points $a_i,b_i\in S$. Notice
that $q_{i-1}q_{i-1}'$ is collinear with $q_{i-2}q_{i-1}$; and similarly
$q_{i+1}'q_{i+1}$ is collinear with $q_{i+1}q_{i+2}$. Therefore,
$q_{i-1}$ and $q_{i+1}$ are not vertices of $\gamma_1$. Let $r_i$ be a
vertical ray emitted by $c_i$, and pick an arbitrary Steiner point
$s_i\in r_i$.
Decompose the region above $(q_{i-1},q_{i-1}',q_{i+1}',q_{i+1})\subset
\gamma_1$ by the polygonal arc
$(q_{i-1},c_i, q_{i+1})$ and the vertical ray $r_i$. Since the
triangle $\Delta(c_i,q_{i-1}',q_{i+1}')$
contains no point from $S$, the two convex regions adjacent to $r_i$
(only one region in the extremal case $i=k/2$)
contain all the points of $S$ lying above
$(q_{i-1},q_{i-1}',q_{i+1}',q_{i+1})$.

\begin{figure}[h]
\centering
    \includegraphics[width=0.8\textwidth]{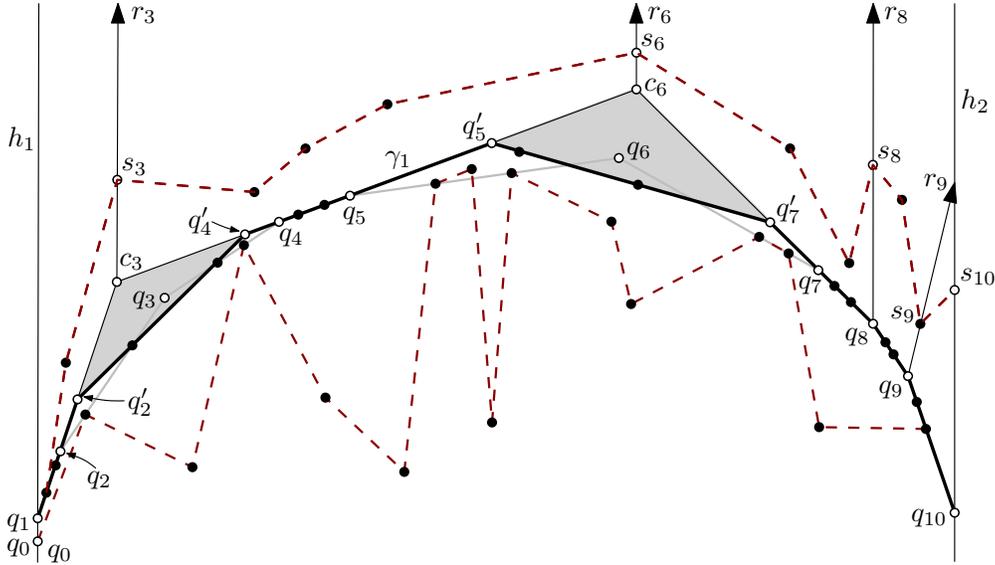}
        \caption{The set of $n=36$ points from Fig.~\ref{fig:eps1}.
        A covering path $\gamma_1$ for a cap of $14$ points (bold).
        A dashed noncrossing path below $\gamma_1$, and a dashed
        noncrossing path above $\gamma_1$ cover all remaining points
        in $S$. The resulting covering path has $n-2=34$ segments.}
        \label{fig:eps3}
\end{figure}

After the local modifications, we proceed analogously to our initial approach.
Construct a noncrossing covering path for $S$ as a concatenation of the following
three paths: A trivial path for the points strictly below $\gamma_1$ from
the Steiner point $q_0\in h_1$ to the rightmost vertex on $\gamma_1$
(Lemma~\ref{trivi});
followed by part of $\gamma_1$ from the rightmost to the leftmost point on $\gamma_1$;
the third path visits all points in the convex regions above $\gamma_1$,
passing though the points $s_i\in r_i$ between consecutive regions.

The resulting path visits all points of $S$ lying below, on, and above
$\gamma_1$ (in this order). We use $\lfloor k/6\rfloor$ fewer segments
than in our initial construction, since each local modification saves one segment:
In Case~1, we use a point $s_i\in S$ instead of a Steiner point. In
Case~2, we decrease the number of segments along $\gamma_1$ by one,
and decrease the number of relevant convex regions above $\gamma_1$ by two.
This concludes the proof of Lemma~\ref{lenyeg}.
\end{proof}

\begin{proof}[Proof of Theorem~\ref{paththm}.]
Let $S$ be a set of $n+1$ points in the plane, no three of which are collinear.
Assume, by rotating the point set if necessary, that no two points have
the same $x$-coordinate.

Lay out a raster of vertical lines in the plane such that there are
exactly $m={32\choose 16}+1=601,080,391$ points between consecutive lines;
no points on the lines or to the left of the leftmost line; and less
than $m$ points to the right of the rightmost line. By the result of
Erd\H{o}s and Szekeres~\cite{ES35}, there is a cap or cup of 18
points between any two raster lines. By Lemma~\ref{lenyeg} (for $n=m$
and $k=18$), the $m$ points between consecutive raster lines admit a
noncrossing covering path
with $m+1-3=m-2$ segments such that the two endpoints of the path are
Steiner points on the two raster lines, and the relative interior of
the path lies strictly between the raster lines. The $n_0<m$ points to
the right of the rightmost raster line can be covered by an
$x$-monotone path with $n_0$ segments starting at the (unique) Steiner
point on that line assigned by the previous group of points.

The noncrossing covering paths between consecutive raster lines can be
joined into a single noncrossing covering path for $S$ by adding one
vertical segment on each raster line except for the first and the last
one, as depicted in Fig.~\ref{pathjoin}. The total number of segments
is $n-\lfloor n/m\rfloor-1\leq \lceil (1-1/601080391)n\rceil-1$, as claimed.
\end{proof}
\begin{figure}[hbtp]
\centering
  \includegraphics[width=0.6\textwidth]{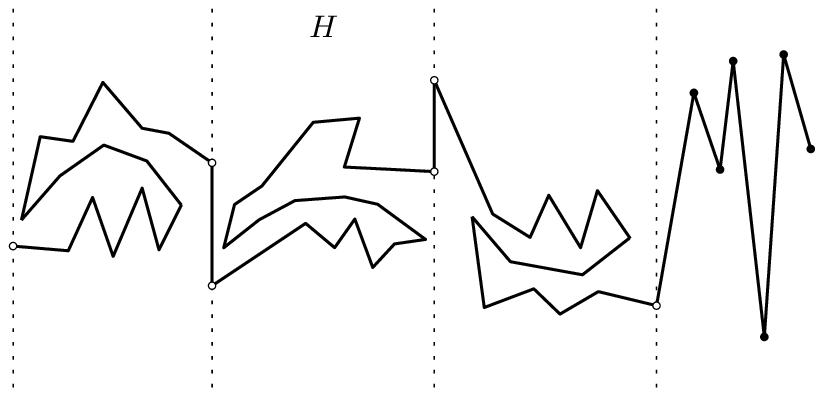}
        \caption{Joining noncrossing covering paths in the last step.}
        \label{pathjoin}
\end{figure}

\begin{proof}[Proof of Corollary~\ref{cor:cycle}.]
In the proof of Theorem~\ref{paththm}, we constructed a noncrossing
covering path $\gamma$ for $S$ such that the two endpoints of $\gamma$
are leftmost and the rightmost vertices of $\gamma$. Hence $\gamma$ can be
augmented to a noncrossing covering \emph{cycle} by adding a new
vertex of sufficiently large $y$-coordinate, and thus proving
Corollary \ref{cor:cycle}.
\end{proof}

\section{Noncrossing Covering Paths: Lower Bound}
\label{sec:cover2}

\paragraph{Proof of Theorem~\ref{thm:path-lower} (outline).}
For every $k\in \mathbb{N}$, we construct a set $S$ of $n=2k$ points
in the plane in general position, where all points are very close to
the parabola $x\rightarrow x^2$. We then show that every noncrossing
covering path $\gamma$ consists of at least $(5n-4)/9$ segments.
The lower bound is based on a charging scheme: we distinguish \emph{perfect}
and \emph{imperfect} segments in $\gamma$, containing 2 and fewer than 2 points
of $S$, respectively. We charge every perfect segment to a ``nearby''
endpoint of an imperfect segment or an endpoint of $\gamma$, such that
each of these endpoints is charged at most twice. This implies that
at most about $\frac{4}{5}$ of the segments are perfect, and the
lower bound of $(5n-4)/9$ follows. We continue with the details.

\paragraph{A technical lemma.}
We start with a simple lemma, showing that certain segments in a
noncrossing covering path are almost parallel. We say that a line
segment $s$ \emph{traverses} a circular disk $D$ if $s$ intersects the
boundary of $D$ twice.

\begin{figure}[htbp]
\centerline{\epsfxsize=0.75\textwidth \epsffile{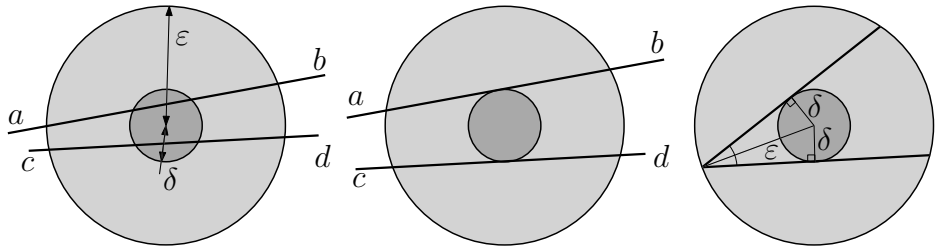}}
\caption{Left: Two noncrossing segments, $ab$ and $cd$, traverse two
concentric disks of radii $\eps > \delta >0$. Middle: The segments are
  translated to be tangent to the disk of radius $\delta$.
  Right: The angle between two adjacent chords is $2\arcsin(\delta/\eps)$.
\label{fig:parallel}}
\end{figure}

\begin{lemma}\label{lem:par}
Let $\varphi \in (0,\frac{\pi}{2})$ be an angle. For every
$\eps>0$, there  exists $\delta\in (0,\eps)$
such that if two noncrossing line segments $ab$ and $cd$ both
traverse two concentric disks of radii $\eps$ and $\delta$,
then the supporting lines of the segments $ab$ and $cd$
meet at an angle at most $\varphi$.
\end{lemma}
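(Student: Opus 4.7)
The plan is to take $\delta := \eps \sin(\varphi/2)$, which indeed lies in $(0,\eps)$ since $\varphi \in (0,\pi/2)$. Let $O$ denote the common center of the two concentric disks, and let $\ell_1, \ell_2$ be the supporting lines of $ab$ and $cd$. If $\ell_1 \parallel \ell_2$, the ``angle'' is $0 \leq \varphi$ and we are done, so I may assume $\ell_1$ and $\ell_2$ meet at a unique point $P$.

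The pivotal observation is that $P$ must lie outside the outer disk of radius $\eps$. Indeed, because each segment traverses the outer disk, each segment enters and leaves that disk through its boundary, so its intersection with the closed outer disk is the entire chord cut by its supporting line. If $P$ lay in the closed outer disk, then $P$ would simultaneously be a point of both chords, hence of both segments $ab$ and $cd$, contradicting the noncrossing hypothesis. Therefore $|OP| > \eps$.

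Next I would invoke elementary trigonometry. Since each segment also traverses the inner disk, its supporting line meets the open disk of radius $\delta$, so its perpendicular distance from $O$ is strictly less than $\delta$. Letting $\alpha_i \in [0, \pi/2]$ denote the (acute) angle between the ray $OP$ and the line $\ell_i$, the perpendicular distance from $O$ to $\ell_i$ equals $|OP|\sin\alpha_i$, so
\[
\sin \alpha_i \;<\; \frac{\delta}{|OP|} \;<\; \frac{\delta}{\eps} \;=\; \sin(\varphi/2),
\]
whence $\alpha_i < \varphi/2$ for $i=1,2$. The angle between $\ell_1$ and $\ell_2$ at $P$ is at most $\alpha_1 + \alpha_2 < \varphi$, as required. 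This quantitative bound matches the $2\arcsin(\delta/\eps)$ displayed in Fig.~\ref{fig:parallel}, the extremal case being precisely when each chord is tangent to the inner disk.

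There is no deep obstacle here; the one nontrivial point is the ``$P$ is outside the outer disk'' step, which depends crucially on the word \emph{traverses} (the segments extend past both boundary crossings of the outer disk, so their full outer-disk chords are contained in the segments). Once that geometric observation is in place, the rest is a one-line trigonometric estimate.
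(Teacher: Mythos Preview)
Your proof is correct and follows essentially the same approach as the paper's: both establish the bound $2\arcsin(\delta/\eps)$ via the observation that the intersection point of the two supporting lines cannot lie in the interior of the outer disk (the paper phrases this as ``the angle between two noncrossing chords is maximal if they have a common endpoint''). Your version is in fact more explicitly argued than the paper's, which leans on a translation-to-tangency step and the figure; the one cosmetic point is that if ``noncrossing'' permits a shared endpoint you only get $|OP|\geq\eps$, but your conclusion is unaffected since the strict inequality already comes from the inner-disk traversal.
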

\begin{proof}
Let $ab$ and $cd$ be two noncrossing line segments that both traverse
two concentric disks of radii $\eps>\delta>0$. Refer to Fig.~\ref{fig:parallel}.
By translating the segments, if necessary, we may assume that both
are tangent to the disk of radius $\delta$. Clip the segments in the
disk of radius $\eps$ to obtain two noncrossing chords.
The angle between two noncrossing chords is maximal if they
have a common endpoint. In this case, they meet at an angle
$2\arcsin(\delta/\eps)$. For every $\eps>0$, $\exists \delta\in (0,\eps)$
such that $2\arcsin(\delta/\eps)<\varphi$.
\end{proof}

\paragraph{Construction.} For every $k\in \mathbb{N}$, we define
a set $S=\{a_1,\ldots , a_k, b_1,\ldots , b_k\}$ of $n=2k$ points.
Initially, let $A=\{a_1,\ldots , a_k\}$ be a set of $k$ points on the
first-quadrant part of the parabola $\alpha: x\rightarrow x^2$ such that
no two lines determined by $A$ are parallel. (To achieve strong general
position, we shall slightly perturb the points in $S$ in the last step of the
construction.) Label the points in $A$ by $a_1,\ldots , a_k$ in increasing
order of $x$-coordinates. Each point $b_i$ will be in a small
$\delta$-neighborhood of $a_i$, for a suitable $\delta>0$ and $i=1,\ldots,k$.
The pairs $\{a_i,b_i\}$ are called \emph{twin}s.
The value of $\delta>0$ is specified in the next paragraph.
See Fig.~\ref{58} for a sketch of the construction.
\begin{figure}[htbp]
\centerline{\epsfxsize=0.45\textwidth \epsffile{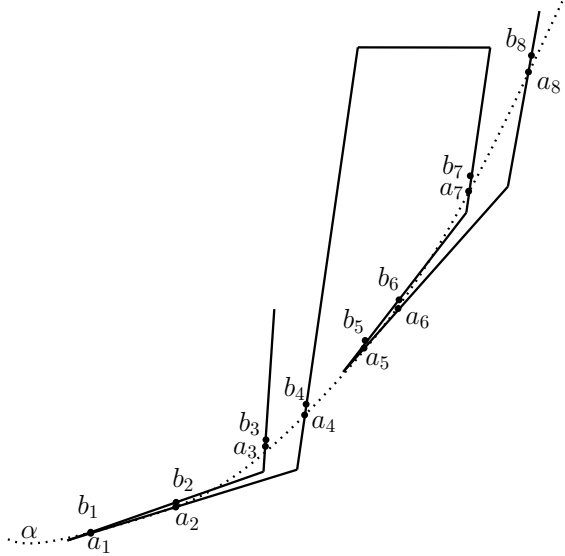}}
\caption{A sketch of our construction $S$ with $k=8$ twins.
(The figure is not to scale.) It is also indicated how 5 consecutive
  segments of a noncrossing path can cover 4 consecutive twins.}
\label{58}
\end{figure}

For every $r>0$, let $D_i(r)$ denote the disk of radius $r$ centered at $a_i\in A$.
Since the points in $A$ are in strictly convex position, points in $A$ determine
${k\choose 2}$ distinct lines. Let $(2\varphi)\in (0,\frac{\pi}{2})$ be the
minimum angle between two lines determined by $A$ (recall that no two such lines
are parallel). Let $\eps>0$ be sufficiently small such if a line intersects two
disks in $\{D_1(\eps),\ldots ,D_k(\eps)\}$, then it meets the line
passing through the centers of the two disks at an angle less than $\varphi/2$.
It follows that any line intersects at most two disks $D_1(\eps),\ldots , D_k(\eps)$
(\ie , the $\eps$-neighborhoods of at most two points in $A$). By
Lemma~\ref{lem:par}, there exists $\delta_0>0$ such that if two
noncrossing segments traverse both $D_i(\eps)$ and $D_i(\delta_0)$,
then their supporting lines meet at an angle less than $\varphi$. For
$i=1,\ldots , k-1$, let $\delta_i>0$ be the maximum distance between
the supporting line of $a_ia_{i+1}$ and points on the arc of the
parabola $\alpha$ between $a_i$ and $a_{i+1}$. We are ready to
define $\delta>0$; let $\delta=\min\{\delta_i:i=0,1,\ldots, k-1\}$.

We now choose points $b_i\in D_i(\delta)$, for $i=1,\ldots , k$, in
\emph{reverse} order. Let $\ell_k$ be a line that passes through $a_k$
such that its slope is larger than the tangent of the parabola $\alpha$ at
$a_k$. Let $b_k$ be a point in $\ell_k\cap D_k(\delta)$ above the
parabola $\alpha$. Having defined line $\ell_j$ and point $b_j$
for all $j>i$, we choose $\ell_i$ and $b_i\in \ell_i \cap D_i(\delta)$
as follows:
\begin{itemize}
\item let $\ell_i$ be a line passing through $a_i$ such that its slope is
  larger than that of $\ell_{i+1}$;
\item let $b_i\in \ell_i\cap D_i(\delta)$ be above the parabola $\alpha$; and
\item let $b_i$ be so close to $a_i$ that for every $j$, $i<j\leq k$, the
      supporting lines of segments $a_ia_j$ and $b_ib_j$ meet in the disk $D_i(\eps)$.
\end{itemize}

Write $B=\{b_i,\ldots , b_k\}$. We also ensure in each iteration that
in the set $S=A\cup B$,
(1) no three points are collinear; (2) no two lines determined by the points
are parallel; and (3) no three lines determined by disjoint pairs of
points are concurrent.

Note that $S$ is not in strong general position: for instance, all points in
$A$ lie on a parabola. (By strong general position it is meant here there is
no nontrivial algebraic relation between the coordinates of the points.)
In the last step of our construction, we slightly perturb the points in $S$.
However, for the analysis of a covering path, we may
ignore the perturbation.

Let $\gamma$ be a noncrossing covering path for $S$.
By perturbing the vertices of $\gamma$ if necessary, we may assume
that every point in $S$ lies in the relative interior of a
segment of $\gamma$. Denote by $s_0$, $s_1$ and $s_2$, respectively,
the number of segments in $\gamma$ that contain 0, 1, and 2 points
from~$S$. We establish the following inequality.

\begin{lemma}\label{pp:s01}
$s_2\leq 4(s_0+s_1+1)$.
\end{lemma}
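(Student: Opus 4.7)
The plan is a counting argument based on maximal blocks of consecutive perfect segments in $\gamma$. Call a segment \emph{perfect} if it contains $2$ points of $S$ in its interior and \emph{imperfect} otherwise. Define a \emph{perfect block} of $\gamma$ to be a maximal run of consecutive perfect segments, and let $B$ denote the number of perfect blocks. Since any two perfect blocks are separated by at least one imperfect segment, $B \leq s_0+s_1+1$. Consequently, if every perfect block has length at most $4$, summing the lengths yields $s_2 \leq 4B \leq 4(s_0+s_1+1)$, as required. Thus the whole proof reduces to the following geometric claim, which I will call (*): every perfect block of $\gamma$ consists of at most $4$ segments.

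To prove (*), I would suppose for contradiction that $e_1,\ldots,e_5$ are five consecutive perfect segments of $\gamma$, joined at Steiner vertices $v_1,\ldots,v_4$ (recall that after the initial perturbation of $\gamma$, every point of $S$ lies in the relative interior of some segment, so the vertices of $\gamma$ avoid $S$). Each $e_m$ covers exactly two points of $S$; by the choice of $\eps$ in the construction, the supporting line of $e_m$ meets at most two of the disks $D_1(\eps),\ldots,D_k(\eps)$, so $e_m$ can be labeled by an index set $I_m \subseteq \{1,\ldots,k\}$ with $|I_m|\in\{1,2\}$, and its two covered points lie in $\bigcup_{i\in I_m}D_i(\delta)$.

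The argument then proceeds by case analysis on the incidence pattern of the labels $I_1,\ldots,I_5$. The three key tools are: (a)~the noncrossing hypothesis on $\gamma$; (b)~Lemma~\ref{lem:par}, which forces any two noncrossing segments that both traverse $D_i(\eps)$ and meet $D_i(\delta)$ to have supporting lines at angle less than $\varphi$; and (c)~the fact that the lines through pairs of points in $A$ are well separated in slope, meeting pairwise at angle at least $2\varphi$. When two consecutive segments $e_m,e_{m+1}$ share a twin (\ie, $I_m\cap I_{m+1}\neq\emptyset$), both pass through the same twin disk $D_i(\delta)$ and Lemma~\ref{lem:par} pins them to near-parallelism; when $|I_m|=2$, the supporting line of $e_m$ is pinned to within $\varphi/2$ of the specific line $a_{i_m}a_{j_m}$.

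The main obstacle is completing this geometric case analysis: one must chain the angular constraints along $e_1,\ldots,e_5$, combining the near-parallelism forced by Lemma~\ref{lem:par} with the convex-position structure of $A$ (the $a_i$ lie on a parabola, so the chords $a_ia_j$ have well-ordered slopes), to force two of $e_1,\ldots,e_5$---or one of them and some later portion of $\gamma$---to cross, contradicting the noncrossing hypothesis and establishing (*).
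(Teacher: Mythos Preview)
Your reduction to claim~(*) --- that every maximal run of consecutive perfect segments has length at most $4$ --- is not just incomplete, it is false for the point set $S$ constructed in Section~\ref{sec:cover2}. Here is a counterexample. Pair the $a$-points consecutively and let $e_m$ be the segment on line $a_{2m-1}a_{2m}$, for $m=1,\dots,\ell$, with $e_m$ and $e_{m+1}$ meeting at the Steiner vertex $v_m$ where those two chords intersect. Since the $a_i$ lie on a convex parabola, the slopes of these chords are strictly increasing, so $(e_1,\dots,e_\ell)$ is an $x$-monotone convex arc; in particular it is noncrossing. Each $e_m$ contains exactly the two points $a_{2m-1},a_{2m}$ of $S$, so every $e_m$ is perfect. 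Because the region above this arc is convex (it is the strict epigraph of a convex piecewise-linear function) and, by the choice of the lines $\ell_i$ in the construction, every $b_i$ lies above the arc, one can extend $(e_1,\dots,e_\ell)$ to a full noncrossing covering path by appending a (possibly wasteful) path through the $b_i$'s in that upper region, attached at $v_\ell$ via an imperfect segment. Thus $\gamma$ contains a perfect block of length $\ell$, and $\ell$ can be taken as large as $k/2$.

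The reason this does not contradict Lemma~\ref{pp:s01} is exactly the mechanism the paper exploits and your approach misses: a long perfect block covering $a$-points forces many imperfect segments \emph{elsewhere} in $\gamma$ in order to reach the corresponding $b$-points. Concretely, by the third bullet in the construction of $b_i$, the line $b_{2m-1}b_{2m}$ meets the line $a_{2m-1}a_{2m}$ inside $D_{2m-1}(\eps)$, so any perfect segment covering $b_{2m-1}$ that traverses $D_{2m-1}(\eps)$ would cross $e_m$. Hence the segment through $b_{2m-1}$ must have an endpoint inside $D_{2m-1}(\eps)$, and the next edge of $\gamma$ at that endpoint is forced to be imperfect (a line through a point of $D_{2m-1}(\eps)$ can meet at most one other disk $D_j(\eps)$, and both points of twin $2m-1$ are already used). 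This is the content of Lemma~\ref{pp:nearby}. The paper therefore does \emph{not} bound block lengths; it sets up a charging scheme in which every perfect segment is charged to an endpoint of some imperfect segment (or of $\gamma$), possibly far away along the path, with each such endpoint charged at most twice. Summing gives $s_2\le 4(s_0+s_1)+4$. Your block-length plan cannot be completed, because the inequality is a global balance, not a local one.
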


Before the proof of Lemma~\ref{pp:s01}, we show that it
immediately implies Theorem~\ref{thm:path-lower}.

\begin{proof}[Proof of Theorem~\ref{thm:path-lower}.]
Counting the number of points incident to the segments,
we have $n=s_1+2s_2$. The number of segments in $\gamma$ is
$s_0+s_1+s_2$. This number can be bounded from below
by using Lemma~\ref{pp:s01} as follows.
\begin{eqnarray}
s_0+s_1+s_2&=& \frac{4(s_0+s_1+1)+5s_0+5s_1-4}{9}+s_2\nonumber\\
&\geq& \frac{s_2+5s_0+5s_1-4}{9}+s_2\nonumber\\
&\geq& \frac{5(s_1+2s_2)-4}{9} =\frac{5n-4}{9},\nonumber
\end{eqnarray}
as claimed
\end{proof}

For the proof of Lemma~\ref{pp:s01}, we introduce a charging scheme:
each perfect segment is charged to either an endpoint of
an imperfect segment, or one of the two endpoints of $\gamma$
such that every such endpoint is charged at most twice.
The charges will be defined for maximal $x$-monotone chains
of perfect segments. A subpath $\gamma'\subseteq \gamma$
is called \emph{$x$-monotone}, if the intersection of
$\gamma'$ with a vertical line is connected (\ie, the empty
set, a point, or a vertical segment).

Recall that all points in $A=\{a_1,\ldots  , a_k\}$ lie on
the parabola $\alpha: x\rightarrow x^2$. Let $\beta$ be
the graph of a strictly convex function that passes through the
points $b_1,\ldots , b_k$, and lies strictly above $\alpha$
and below the curve $x\rightarrow x^2+\delta$.

\paragraph{Properties of a noncrossing path covering $S$.}
We start by characterizing the perfect segments in $\gamma$.
Note that if $pq$ is a perfect segment in $\gamma$, then $pq$ contains either
a twin, or one point from each of two twins. First we make a few observations
about perfect segments containing points from two twins.

\begin{lemma}\label{pp:perfect}
Let $pq$ be a perfect segment in $\gamma$ that contains one point
from each of the twins $\{a_i,b_i\}$ and $\{a_j,b_j\}$, where $i<j$.
Then $pq$ intersects both $D_i(\delta)$ and $D_j(\delta)$,
and its endpoints lie below the curve $\beta$.
\end{lemma}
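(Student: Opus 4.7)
Claim~1 is immediate: the $S$-point of the twin $\{a_i,b_i\}$ that lies on $pq$ is either the center $a_i$ of $D_i(\delta)$ or the point $b_i\in D_i(\delta)$ (by construction), so $pq$ meets $D_i(\delta)$, and analogously it meets $D_j(\delta)$. For the substantive claim that both endpoints of $pq$ lie below $\beta$, my plan is to use that we may assume every point of $S$ lies in the relative interior of a segment of $\gamma$, so the endpoints $p,q$ are strictly beyond the twin points along the line $L=\overline{pq}$: $p$ to the left of the twin-$i$ point, $q$ to the right of the twin-$j$ point. It will then suffice to show that $L$, extended past each twin point, lies strictly below $\beta$.

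My approach is a slope comparison. Since $L$ passes through a point of $D_i(\delta)\subset D_i(\eps)$ and a point of $D_j(\delta)\subset D_j(\eps)$, and the lines $\overline{a_ia_j}$ and $\overline{b_ib_j}$ meet inside $D_i(\eps)$ by construction, the slope of $L$ will be within $O(\delta)$ of the chord slope $x_i+x_j$ of $\overline{a_ia_j}$. I would then split into cases by which of $a_i,b_i$ lies on $pq$. If it is $a_i$, then the slope of $L$ strictly exceeds the tangent slope $2x_i$ of $\alpha$ at $a_i$ (with margin absorbing the $O(\delta)$ correction, since $x_j>x_i$), so $L$ dips strictly below $\alpha$ just to the left of $a_i$; by the identity $\alpha(x)-\text{chord}(x)=(x-x_i)(x-x_j)$ the vertical gap grows, and since $\beta$ stays within $\delta$ above $\alpha$, $L$ remains strictly below $\beta$ throughout the leftward extension. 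If it is $b_i$, which lies on $\beta$, I would invoke strict convexity of $\beta$: its slope at $b_i$ is strictly less than the chord slope of $\overline{b_ib_{i+1}}\approx x_i+x_{i+1}\le x_i+x_j$, hence strictly less than the slope of $L$ (again with room for the $O(\delta)$ correction), so $L$ dips below $\beta$ just to the left of $b_i$, and the parabolic bound from the previous case keeps $L$ below $\beta$ farther left. The argument for $q$ on the right will be symmetric.

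The hard part will be the tight subcase $j=i+1$ with $L$ passing through $b_i$ and $a_{i+1}$ (or its mirror): the slope of $L$ differs from that of $\overline{b_ib_{i+1}}$ only by an $O(\delta)$ term, so the convexity-strictness gap for $\beta'(x_{b_i})$ must be of constant order, independent of $\delta$. I plan to resolve this by fixing $\beta$ in advance so that $\beta'(x_{b_i})$ is bounded away from the chord slope of $\overline{b_ib_{i+1}}$ by an absolute margin depending only on the spacings $x_{i+1}-x_{i-1}$ — for instance by taking $\beta$ to be a strictly convex smoothing of the polyline through $b_1,\ldots,b_k$ whose slope at each $b_i$ is close to that of $\overline{b_{i-1}b_i}$. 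The construction's cascade $\eps\to\delta_0\to\delta_i\to\delta$ is precisely what will allow this margin to dominate the $O(\delta)$ slope corrections.
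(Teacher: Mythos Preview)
Your argument for the disk intersections is correct and matches the paper. For the second claim your slope-comparison route can in principle be pushed through, but it is far more laborious than needed, and your proposed fix for the ``hard subcase'' $j=i+1$ is not symmetric enough: to treat $b_i$ as the \emph{left} twin point you need $\beta'(b_i)$ bounded below the slope of $\overline{b_ib_{i+1}}$, but the mirrored argument on the right side---when $b_i$ plays the role of the \emph{right} twin point---requires $\beta'(b_i)$ bounded \emph{above} the slope of $\overline{b_{i-1}b_i}$. Your suggestion of pinning $\beta'(b_i)$ near the slope of $\overline{b_{i-1}b_i}$ destroys the right-side margin. This is repairable (pin $\beta'(b_i)$ strictly between the two chord slopes), but it illustrates how delicate the local-slope approach becomes, and you would still owe a verification that such a $\beta$ exists and that the $O(\delta)$ corrections are uniformly dominated.

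The paper avoids all of this with a single global convexity observation. The line $L$ supporting $pq$ meets $\beta$ \emph{exactly} twice: at most twice because the region above $\beta$ is strictly convex, and at least twice because $\beta$ is sandwiched between $\alpha$ and $x\mapsto x^2+\delta$, each of which $L$ crosses twice. The two $S$-points on $pq$ lie on or below $\beta$, while the choice $\delta\le\delta_i$ forces $L$ to rise above $\beta$ somewhere strictly between them (any line from $D_i(\delta)$ to $D_j(\delta)$ climbs at least $\delta_i\ge\delta$ above $\alpha$, hence above $\beta$). Therefore both intersection points of $L$ with $\beta$ lie between the two $S$-points, and $L$---hence $pq$---is below $\beta$ everywhere outside that interval, in particular at $p$ and at $q$. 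No case split on which of $a_i,b_i$ lies on $pq$, no tailoring of $\beta'$, and the $j=i+1$ case is absorbed automatically by the very definition of $\delta_i$.
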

\begin{proof}
The distance between any two twin points is less than $\delta$,
so $pq$ intersects the $\delta$-neighborhood of $a_i$ and $a_j$
(even if $pq$ contains $b_i$ or $b_j$). The line $pq$
intersects the parabolas $\alpha:x\rightarrow x^2$ and
$x\rightarrow x^2+\delta$ twice each. It also intersects $\beta$
exactly twice: at least twice, since $\beta$ is between the two parabolas;
and at most twice since the region above $\beta$ is strictly convex.
All points in $\{a_i,b_i\}$ and $\{a_j,b_j\}$ are on or below $\beta$;
but $pq$ is above $\beta$ at some point between its intersections
with $\{a_i,b_i\}$ and $\{a_j,b_j\}$, since $\delta\leq \delta_i$.
Hence the endpoints of $pq$ are below $\beta$.
\end{proof}

\begin{lemma}\label{pp:nearby}
Let $pq$ be a perfect segment of $\gamma$ that contains one point from
each of the twins $\{a_i,b_i\}$ and $\{a_j,b_j\}$, where $i<j$. Assume that $p$
is the left endpoint of $pq$. Let $s$ be the segment of $\gamma$
containing the other point of the twin $\{a_i,b_i\}$. Then one of the
following four cases occurs.
\begin{itemize}
\item []\emph{Case 1:} $p$ is incident to an imperfect segment of
  $\gamma$, or $p$ is an endpoint of $\gamma$;
\item []\emph{Case 2:} $s$ is imperfect;
\item []\emph{Case 3:} $s$ is perfect, one of its endpoints $v$ lies
  in $D_i(\eps)$, and $v$ is either incident to some imperfect
  segment or it is an endpoint of $\gamma$;
\item []\emph{Case 4:} $s$ is perfect and $p$ is the common left
  endpoint of segments $pq$ and $s$.
\end{itemize}
\end{lemma}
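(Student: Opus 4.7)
The plan is to argue by contradiction, assuming Cases~1, 2, and 4 all fail and deriving Case~3. Since Case~2 fails, $s$ is perfect, so $s$ covers $y_i$ (the twin point of $x_i$) together with exactly one point from some other twin $\{a_t,b_t\}$, $t\neq i$. Let $v$ denote the endpoint of $s$ on the $D_i$-side, that is, $v=s_L$ if $t>i$ and $v=s_R$ if $t<i$. I would show (a) $v\in D_i(\varepsilon)$ and (b) the segment of $\gamma$ meeting $s$ at $v$, if any, is imperfect, so that $v$ witnesses Case~3.

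For (a), suppose to the contrary that $v\notin D_i(\varepsilon)$. Since $y_i\in D_i(\delta)$ lies interior to $s$, the segment $s$ then traverses $D_i(\varepsilon)$. Split into two sub-cases. If $p\notin D_i(\varepsilon)$, then $pq$ also traverses $D_i(\varepsilon)$ and $D_i(\delta)$, so Lemma~\ref{lem:par} forces the supporting lines of $pq$ and $s$ to meet at angle less than $\varphi$. By the choice of $\varepsilon$, the supporting line of $pq$ is within $\varphi/2$ of the line $a_ia_j$ and the supporting line of $s$ is within $\varphi/2$ of $a_ia_t$. When $t\neq j$, the $2\varphi$-lower bound on angles between distinct lines through $a_i$, combined with the triangle inequality, contradicts Lemma~\ref{lem:par}. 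When $t=j$, the construction ensures that the supporting lines of any two perfect segments covering the twin pair $\{i,j\}$ cross inside $D_i(\varepsilon)$, so the chords cut by $pq$ and $s$ inside $D_i(\varepsilon)$ cross---contradicting that $\gamma$ is noncrossing. If instead $p\in D_i(\varepsilon)$, then by failure of Case~1 the other segment $pq'$ at $p$ is perfect; its supporting line passes through $p\in D_i(\varepsilon)$ and through the two $\varepsilon$-disks associated with the twins it covers. Since any line meets at most two of $D_1(\varepsilon),\ldots,D_k(\varepsilon)$, one of those twins must be twin~$i$, forcing $pq'$ to contain $a_i$ or $b_i$. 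But both points of twin~$i$ already lie in the interiors of $pq$ and $s$, and each point of $S$ is interior to at most one segment of $\gamma$---contradiction.

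For (b), assume $v$ is an internal vertex of $\gamma$ and the other segment $s'$ at $v$ is perfect. Since $v\in D_i(\varepsilon)$, the supporting line of $s'$ enters $D_i(\varepsilon)$, and the same at-most-two-disks argument as above forces $s'$ to cover twin~$i$---impossible, because $a_i$ and $b_i$ are already interior to $pq$ and $s$. Hence $v$ is either an endpoint of $\gamma$ or is incident to an imperfect segment, establishing Case~3.

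The main obstacle I expect is the sub-case $t=j$ in step~(a): here Lemma~\ref{lem:par} alone only shows that the two supporting lines are nearly parallel, and one must invoke the more delicate construction-level fact that the supporting lines of any two perfect segments covering the same twin pair $\{i,j\}$ cross inside $D_i(\varepsilon)$. This is exactly why each $b_i$ was placed so close to $a_i$ in the construction: the lines crossing inside $D_i(\varepsilon)$ upgrade to a chord crossing inside the disk and thereby violate the noncrossing property of $\gamma$.
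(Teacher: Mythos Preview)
Your argument is correct in outline and follows the same route as the paper, but the paper organizes step~(a) more cleanly and thereby avoids a small gap in your version. Rather than splitting into the sub-cases $p\in D_i(\eps)$ versus $p\notin D_i(\eps)$, the paper shows $p\notin D_i(\eps)$ directly: since Cases~1 and~4 fail, the other segment $pr$ at $p$ is perfect and distinct from $s$, so $pr$ contains two points of $S\setminus\{a_i,b_i\}$, and the paper invokes the construction-level fact that \emph{every} line determined by two points of $S\setminus\{a_i,b_i\}$ is disjoint from $D_i(\eps)$; hence $p\notin D_i(\eps)$. Your ``three disks'' counting argument in the sub-case $p\in D_i(\eps)$ does not cover the possibility that $pq'$ contains both points of a single twin $\{a_m,b_m\}$ with $m\neq i$: then only $D_m(\eps)$ and $D_i(\eps)$ are involved and no three-disk contradiction arises. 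The paper's stronger assertion handles this uniformly (and the same assertion is what really drives your part~(b) as well). Your handling of the case $p\notin D_i(\eps)$, including the split into $t\neq j$ via the triangle inequality and $t=j$ via the lines crossing inside $D_i(\eps)$, matches the paper's argument, and you correctly identified the $t=j$ crossing as the delicate point.
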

\begin{figure}[htbp]
\centerline{\epsfxsize=0.7\textwidth \epsffile{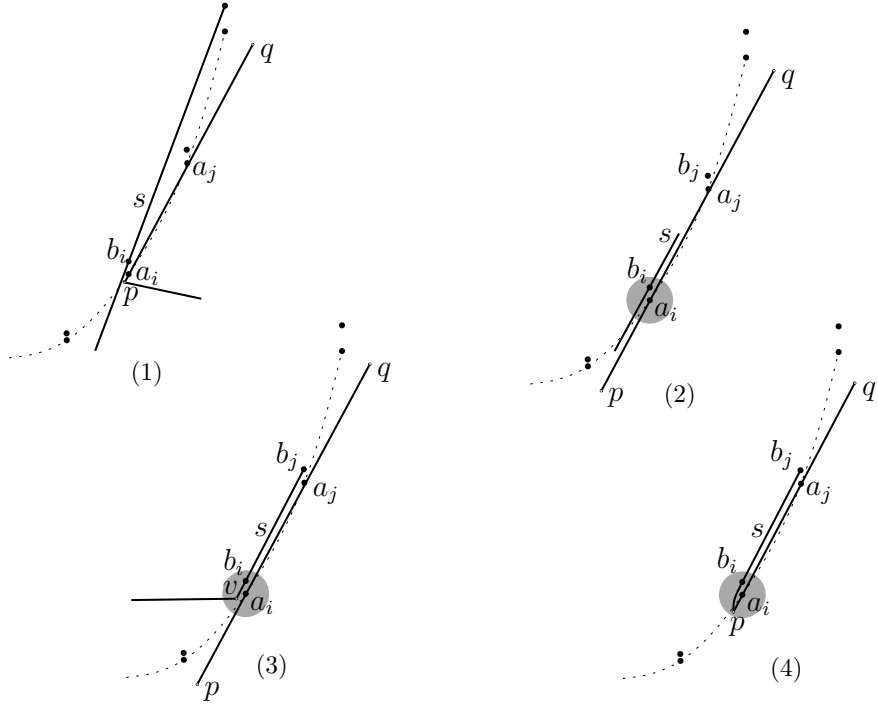}}
\caption{The four cases in Lemma~\ref{pp:nearby} for
  a perfect segment $pq$ that contains one point from each
  of the twins $\{a_i,b_i\}$ and $\{a_j,b_j\}$.
The points $a_i$, $i=1,\ldots , k$, lie on the dotted parabola
$\alpha$. The disk $D_i(\eps)$ is shaded.}
\label{95}
\end{figure}
\begin{proof}
If $p$ is incident to an imperfect segment of $\gamma$, or $p$ is an
endpoint of $\gamma$, then Case~1 occurs. Assume therefore that $p$ is
incident to two perfect segments of $\gamma$, $pq$ and $pr$. If $pr=s$,
then $p$ is the common left endpoint of two perfect segments, $pq$ and
$s$, and Case~4 occurs. If $s$ is imperfect, then Case~2 occurs.

Assume now that $pr\neq s$ and $s$ is perfect. We shall show that
Case~3 occurs. We claim that the segment $pq$ traverses
$D_i(\eps)$. It is enough to show that $p$ and $q$ lie outside
of $D_i(\eps)$. Note that $pr$ does not contain any
point from the twin $\{a_i,b_i\}$ (these points are covered
by segments $pq$ and $s$). Since $pr$ is a perfect segment, it
contains two points from
$S\setminus \{a_i,b_i\}$. By construction, every line determined by
$S\setminus \{a_i,b_i\}$ is disjoint from  $D_i(\eps)$, hence $pr$
(including $p$) is outside of $D_i(\eps)$.
Since $pq$ contains a point from $\{a_j,b_j\}$, $i<j$, point $q$ is also
outside of $D_i(\eps)$. Hence $pq$ traverses $D_i(\eps)$.

We also claim that $s$ cannot traverse $D_i(\eps)$. Suppose, to the
contrary, that $s$ traverses $D_i(\eps)$.
By Lemma~\ref{lem:par}, the supporting lines of $pq$ and $s$
meet at an angle less than $\varphi$. By the choice of $\eps$,
the supporting line of $s$ can intersect the
$\eps$-neighborhoods of $a_i$ and $a_j$ only. However, by the
choice of $b_i$, if $s$ contains one point from each of $\{a_i,b_i\}$
and $\{a_j,b_j\}$, then the supporting lines of $s$ and $pq$ intersect
in $D_i(\eps)$. This contradicts the fact that the segments of
$\gamma$ do not cross, and proves the claim.

Since $s$ does not traverse $D_i(\eps)$, it has an endpoint
$v$ in $D_i(\eps)$. If $v$ is the endpoint of $\gamma$, then Case~3 occurs.
If $v$ is incident to some other segment of $\gamma$, this segment cannot
be perfect since every line intersects the $\eps$-neighborhoods of at most
two points in $A$. Hence $v$ is incident to an imperfect segment,
and Case~3 occurs.
\end{proof}

We continue with two simple observations about perfect segments
containing twins.

\begin{lemma}\label{pp:2vertical}
The supporting lines of any two twins intersect below $\alpha$.
\end{lemma}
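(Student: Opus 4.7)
The plan is to compute the intersection $p=\ell_i\cap\ell_j$ of the two twin lines (for $i<j$) directly, show that $p_x<x_i$, and then invoke convexity of $\alpha$ to conclude that $p$ lies below the parabola. Throughout I write $a_i=(x_i,x_i^2)$ with $0<x_1<\dots<x_k$ and denote by $m_i$ the slope of $\ell_i$, so that $\ell_i(x)=x_i^2+m_i(x-x_i)$.

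The first step is to extract the necessary slope information from the construction. It explicitly fixes $m_k>2x_k$ and iteratively requires $m_i>m_{i+1}$ for $i<k$, so for every $j\le k$ one gets $m_j\ge m_k>2x_k\ge 2x_j$, and combined with $x_i<x_j$ this yields the chain
\[
m_i\;>\;m_j\;>\;2x_j\;>\;x_i+x_j.
\]
Condition~(2) of the general-position requirement on $S$ (no two lines determined by points of $S$ are parallel) guarantees that $\ell_i$ and $\ell_j$ meet at a single finite point $p$.

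Equating $\ell_i(p_x)=\ell_j(p_x)$ and rearranging yields the identity
\[
(m_i-m_j)(p_x-x_i)=(x_j-x_i)(x_i+x_j-m_j).
\]
The coefficient $m_i-m_j$ on the left is positive, while on the right $x_j-x_i>0$ and $x_i+x_j-m_j<0$ by the slope chain, so $p_x<x_i$. The difference $\alpha(x)-\ell_i(x)=x^2-m_ix-(x_i^2-m_ix_i)$ is a quadratic in $x$ with roots exactly $x_i$ and $m_i-x_i$, and the second root exceeds the first because $m_i>2x_i$. This quadratic is therefore positive for every $x<x_i$; applied at $x=p_x$ it gives $p_y=\ell_i(p_x)<p_x^2$, so $p$ lies strictly below $\alpha$, as required.

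The only delicate point is to propagate the construction's sole explicit slope inequality $m_k>2x_k$ along the iteration $m_i>m_{i+1}$ to conclude $m_j>2x_j$ for every $j\le k$. Once the slope chain is in place, the rest is a one-line algebraic identity together with the convexity of the parabola.
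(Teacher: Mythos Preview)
Your proof is correct and follows essentially the same approach as the paper's: both use the slope chain $m_i>m_j$ together with $m_j>2x_j$ to force the intersection to lie left of $a_i$ and hence below the parabola. The paper argues this geometrically and tersely (``$a_i$ lies above the supporting line of $a_jb_j$, hence the lines meet below $\alpha$''), whereas you make the same steps explicit via the identity $(m_i-m_j)(p_x-x_i)=(x_j-x_i)(x_i+x_j-m_j)$ and the factorization of $\alpha-\ell_i$; you also spell out the propagation $m_j\ge m_k>2x_k\ge 2x_j$, which the paper uses implicitly when it asserts that each $\ell_i$ is steeper than the tangent at $a_i$.
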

\begin{proof}
By construction, the supporting line of every twin has positive slope;
and $a_ib_i$ has larger slope than $a_jb_j$ if $1\leq i<j\leq k$.
Furthermore, the line $a_ib_i$ has larger slope than the tangent
line of the parabola $x\rightarrow x^2$ at $a_i$, hence $a_i$
lies above the supporting line of $a_jb_j$ for $1\leq i<j\leq k$.
It follows that the supporting lines of segments
$a_ib_i$ and $a_jb_j$ intersect below $\alpha$.
\end{proof}

\begin{lemma}\label{pp:vertical}
Let $pq$ be a perfect segment of $\gamma$ that contains a twin
$\{a_i,b_i\}$, and let $q$ be the upper (\ie , right) endpoint of $pq$.
Then either $q$ is incident to an imperfect segment of $\gamma$
or $q$ is an endpoint of~$\gamma$.
\end{lemma}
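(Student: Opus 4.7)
The plan is to argue by contradiction: assume $q$ is not an endpoint of $\gamma$ and that the segment $s\neq pq$ of $\gamma$ incident to $q$ is also perfect. A preliminary observation pins down the location of $q$: since $\ell_i$ has slope $m_i>2x_i$ (the tangent slope of $\alpha$ at $a_i$) and $b_i\in\ell_i$ lies above $\alpha$, the point $b_i$ has $x$-coordinate greater than $x_i$; as $q$ lies above $b_i$ on $\ell_i$, we conclude $x_q>x_i$. Because $a_i,b_i$ lie in the relative interior of $pq$, the two $S$-points carried by $s$ belong to $S\setminus\{a_i,b_i\}$, which splits the analysis into two cases.

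In the first case, $s$ contains a whole other twin $\{a_j,b_j\}$. Then $s\subset\ell_j$ and $q=\ell_i\cap\ell_j$. Re-running the reasoning in the proof of Lemma~\ref{pp:2vertical}, $a_{\min(i,j)}$ lies strictly above $\ell_{\max(i,j)}$; combined with the monotonicity $m_{\min(i,j)}>m_{\max(i,j)}$ of the twin slopes, this forces the intersection $\ell_i\cap\ell_j$ to occur at some $x<\min(x_i,x_j)\leq x_i$, contradicting $x_q>x_i$.

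In the second case, $s$ contains one point from each of two twins $\{a_m,b_m\}$ and $\{a_n,b_n\}$, with $m<n$ and $i\notin\{m,n\}$. Let $\mu$ be the supporting line of $s$; for small enough $\delta$, $\mu$ is an $O(\delta)$-perturbation of the parabola chord through $a_m$ and $a_n$, which has slope $x_m+x_n$. Intersecting $\ell_i$ with this chord yields, after a short algebraic manipulation, the identities
\[
x_q-x_i=\frac{-(x_i-x_m)(x_i-x_n)}{m_i-x_m-x_n},\qquad x_q-x_m=\frac{(x_i-x_m)(m_i-x_i-x_m)}{m_i-x_m-x_n},
\]
and symmetrically for $x_q-x_n$. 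The denominator is positive because the construction ensures $m_i>2x_k\geq x_m+x_n$. If $i\notin[m,n]$, then $(x_i-x_m)(x_i-x_n)>0$ and $x_q<x_i$, contradicting $x_q>x_i$. If $m<i<n$, the same sign analysis gives $x_m<x_q<x_n$ with a margin depending only on the fixed quantities $\{x_j,m_j\}$, not on $\delta$. Choosing $\delta$ small enough, $q$ lies strictly between the two $S$-points on $s$ along $\mu$, placing $q$ in the relative interior of $s$ and contradicting that $q$ is an endpoint of $s$.

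The main obstacle is the last subcase ($m<i<n$): here the sign of $x_q-x_i$ alone is consistent with the assumption, and the contradiction is obtained only by extracting the sharper geometric fact that $q$ is pinched strictly between the two $S$-points on $s$, and then invoking that $q$ is an \emph{endpoint} rather than an interior point of $s$.
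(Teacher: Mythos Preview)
Your proof is correct but takes a more computational route than the paper's. The paper's argument is essentially two lines: it observes that $q$ lies above the auxiliary convex curve $\beta$, and then dismisses both possibilities for a perfect neighbour $qr$ by citing Lemma~\ref{pp:2vertical} (the lines $\ell_i,\ell_j$ through two twins meet below $\alpha$, hence below $\beta$) and Lemma~\ref{pp:perfect} (a segment carrying points from two distinct twins has both endpoints below $\beta$). You bypass $\beta$ and Lemma~\ref{pp:perfect} entirely, working directly with coordinates: the observation $x_q>x_i$ plays the role of ``$q$ above $\beta$'', and explicit formulas for $\ell_i\cap\overline{a_ma_n}$ replace the appeal to Lemma~\ref{pp:perfect}. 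This makes your argument longer and introduces a perturbation step (the displayed identities are for the exact chord, so you implicitly need $\delta$ small relative to the finitely many fixed margins $|(x_i-x_m)(x_i-x_n)|/(m_i-x_m-x_n)$; this is permitted by the construction but worth stating). In exchange, your proof is self-contained, and your handling of the subcase $m<i<n$---showing that the shared endpoint $q$ would be trapped in the \emph{relative interior} of $s$---is a clean geometric observation with no analogue in the paper's version.
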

\begin{proof}
Observe that $q$ lies above $\beta$.
If $q$ is an endpoint of $\gamma$, then our proof is complete.
Suppose that $q$ is incident to segments $pq$ and $qr$ of $\gamma$.
By Lemma~\ref{pp:2vertical}, $qr$ does not contain a twin.
By Lemma~\ref{pp:perfect}, $qr$ cannot contain one point from
each of two twins, either, since then its endpoints would lie below $\beta$.
It follows that $qr$ is an imperfect segment of $\gamma$, as required.
\end{proof}

\begin{proof}[Proof of Lemma~\ref{pp:s01}.]
Let $\Gamma'$ be the set of maximal $x$-monotone chains of perfect
segments in $\gamma$. Consider a chain $\gamma'\in \Gamma'$.
By Lemma~\ref{pp:vertical}, only the rightmost segment of $\gamma'$
may contain a twin. It is possible that the leftmost segment
of $\gamma'$ contains one point from each of two twins, and the left
endpoint of $\gamma'$ is incident to another perfect segment,
which is the left endpoints of another $x$-monotone chain in $\Gamma'$.

Let $pq$ be a perfect segment of $\gamma$, and part of an $x$-monotone
chain $\gamma'\in \Gamma'$. We charge $pq$ to a point $\sigma(pq)$
that is either an endpoint of some imperfect segment or an endpoint of $\gamma$.

The point $\sigma(pq)$ is defined as follows.
If $pq$ contains a twin, then charge $pq$ to the top vertex of
$pq$, which is the endpoint of an imperfect segment or an endpoint of
$\gamma$ by Lemma~\ref{pp:vertical}. Assume now that $pq$ does not
contain a twin, its left endpoint is $p$, and it contains a point from each
of the twins $\{a_i,b_i\}$ and $\{a_j,b_j\}$, with $i<j$. We consider
the four cases presented in Lemma~\ref{pp:nearby}.

In Case~1, charge $pq$ to $p$,
which is the endpoint of an imperfect segment or an endpoint of $\gamma$.
In Case~2, charge $pq$ to the left endpoint of the imperfect segment $s$
containing a point of the twin $\{a_i,b_i\}$. In Case~3, charge $pq$
to either an endpoint an imperfect segment or an endpoint of $\gamma$
located in $D_i(\eps)$. So far, every endpoint of an imperfect segment
and every endpoint of $\gamma$ is charged at most once. Now, consider
Case~4 of Lemma~\ref{pp:nearby}. In this case, $pq$ is the leftmost
segment of $\gamma'$. If $\gamma'$ contains exactly one perfect segment,
namely $pq$, then charge $pq$ to its right endpoint, which is the endpoint
of some imperfect segment or the endpoint of $\gamma$. If $\gamma'$ contains
at least two perfect segments, then pick an arbitrary perfect segment $s$,
$s \neq pq$, from $\gamma'$. Since $s$ is not the leftmost segment of $\gamma'$,
the point $\sigma(s)$ is already defined, and we let $\sigma(pq)=\sigma(s)$.
This completes the definition of $\sigma(pq)$.

Each endpoint of $\gamma$ and each endpoint of every imperfect segment is
now charged at most twice. Since $\gamma$ and every imperfect segment
has two endpoints, we have $s_2\leq 4(s_0+s_1)+4$, as required.
\end{proof}

\paragraph{Remark.} We do not know whether the lower bound $(5n-4)/9$
for the number of segments in a minimum noncrossing covering path is
tight for the $n$-element point set $S$ we have constructed. The set
$S$ certainly has a covering path with $5n/8+O(1)$ segments. Such a
path is indicated in Fig.~\ref{58}, where 5 consecutive
segments (4 perfect and one imperfect) cover 4 consecutive twins.

\section{Noncrossing Covering Trees}

An upper bound $t(n) \leq \lceil (1-1/601080391)n\rceil$ for noncrossing
covering trees follows from Theorem~\ref{paththm}. However, the
argument can be greatly simplified while also improving the bound.

\begin{proof}[Proof of Theorem~\ref{thm:trees-upper}.]
Any set of 7 points with distinct $x$-coordinates
contains a cap or cup of 4 points, say, $a,b,c,d$, from left to
right. The $4$ points $a,b,c,d$, of a cap or cup admit a covering path with 2
segments, \ie, a 2-edge star centered at the intersection point, say
$v$, of the lines through $ab$ and $cd$, respectively.
Augment this 2-edge star covering 4 points to a 5-edge star centered at $v$
and covering all 7 points. The star is contained in the vertical strip
bounded by vertical lines incident to the leftmost and the rightmost
point, respectively.

We may assume, by rotating the point set if necessary, that no two
points have the same $x$-coordinate.
Let $p_1,p_2,\ldots,p_n$ be the points in $S$ listed in left to right order.
Decompose $S$ into groups of 7 by drawing vertical lines incident
to $p_{7+6i}$, $i=0,1,\ldots$.
Any two consecutive groups in this decomposition share a point (the
last point in group $i$ is also the first point in group $i+1$).
Thus the stars covering the groups (using $5$ edges per group)
are already connected in a tree covering all points, that
yields the claimed bound.
\end{proof}

We prove a lower bound for $t(n)$ by analyzing noncrossing covering
trees of the point set  
$S=\{a_1,\ldots , a_k, b_1,\ldots , b_k\}$, $n=2k$, 
defined in Section~\ref{sec:cover2} above. 
Let $\tau$ be a noncrossing covering tree for $S$. By perturbing
the vertices of $\tau$ if necessary, we may assume that every point
in $S$ lies in the relative interior of a segment of $\tau$. Let $s_0$,
$s_1$ and $s_2$, respectively, denote the number of
segments in $\tau$ that contain 0, 1, and 2 points from $S$;
hence $n=s_1+2s_2$. We establish the following weaker version
of Lemma~\ref{pp:s01}.

\begin{lemma}\label{lem:tree}
$s_2\leq 8(s_0+s_1)+4$
\end{lemma}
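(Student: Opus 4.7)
The plan is to adapt the charging argument from the proof of Lemma~\ref{pp:s01} to the tree $\tau$, paying a factor of two in the resulting bound. The four geometric lemmas of Section~\ref{sec:cover2}---Lemmas~\ref{pp:perfect}, \ref{pp:2vertical}, \ref{pp:vertical}, and~\ref{pp:nearby}---describe local properties of one perfect segment (together with at most one neighboring segment) with respect to the point set $S$, and they do not rely on the covering structure being a path. Each of them carries over verbatim to segments of $\tau$ once every occurrence of ``endpoint of $\gamma$'' is replaced by ``leaf of $\tau$'' (a vertex of $\tau$ of degree~$1$), so the same local classification of perfect segments is available in the tree setting.

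As in the path proof, we will charge each perfect segment of $\tau$ to a \emph{chargeable vertex}---defined to be either a leaf of $\tau$ or an endpoint of an imperfect segment. Perfect segments containing a full twin are charged to their top endpoint using Lemma~\ref{pp:vertical}; perfect segments containing one point from each of two twins are handled via Lemma~\ref{pp:nearby}. Cases~1, 2, and~3 of that lemma work exactly as in the path proof and send the charge to a chargeable vertex inside the $\eps$-neighborhood of one of the two twins. The only genuinely new case is Case~4, where in a path the shared left endpoint $p$ had degree exactly~$2$, whereas in a tree $p$ may be incident to arbitrarily many perfect segments.

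To resolve Case~4, let $\tau^{\ast}\subseteq \tau$ be the subforest of $\tau$ spanned by all perfect segments. For every Case~4 perfect segment $pq$, we will propagate its charge along the component of $\tau^{\ast}$ containing $pq$ until it reaches a vertex that is either a leaf of $\tau$ or incident in $\tau$ to an imperfect segment---such a vertex must exist on the ``boundary'' of the component, so the propagation terminates at a chargeable vertex. A key intermediate observation is that a shared endpoint of Case~4 type at an internal vertex of $\tau^{\ast}$ cannot repeat too many times in the same $x$-direction without forcing two supporting lines to cross in the corresponding $D_i(\eps)$, which would contradict Lemma~\ref{lem:par} combined with the noncrossing property.

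The main obstacle is to bound the total multiplicity of the charging. By the choice of $\eps$, only a bounded number of perfect segments can contribute a Case~1--3 or twin-carrying charge at any given chargeable vertex $v$, essentially as in the path proof. The Case~4 charges require a more careful analysis: they are amortized over the boundary vertices of the components of $\tau^{\ast}$ via a tree-counting argument, while the geometric constraints limit the number of perfect segments that can meet at any single Steiner vertex without producing a crossing. Putting these two arguments together shows that every chargeable vertex receives at most $4$ charges---twice the bound attained in the path case. Since each imperfect segment supplies two endpoints (contributing at most $2(s_0+s_1)$ imperfect-segment occurrences) and only a bounded additional contribution comes from leaves of $\tau$ that are not endpoints of an imperfect segment, the desired inequality $s_2 \leq 8(s_0+s_1)+4$ follows.
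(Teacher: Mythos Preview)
Your proposal has the right starting point---the four geometric lemmas do transfer verbatim once ``endpoint of $\gamma$'' becomes ``leaf of $\tau$''---but two genuine gaps remain.

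First, the statement that ``in a tree $p$ may be incident to arbitrarily many perfect segments'' is false for this particular point set. Property~(3) in the construction of $S$ (no three lines determined by disjoint pairs of points are concurrent) together with general position forces any Steiner vertex of $\tau$ to be incident to at most \emph{two} perfect segments. Hence the perfect segments of $\tau$ form vertex-disjoint paths---the maximal chains $\Gamma$---exactly as in the path case. The paper states and uses this in the first sentence of its proof, and your worry about high-degree perfect vertices (and the vague ``tree-counting'' / geometric-constraint discussion that follows) is addressing a non-issue while missing the structural fact that actually drives the argument.

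Second, and more seriously, your final accounting does not close. You charge perfect segments to ``chargeable vertices'' (endpoints of imperfect segments or leaves of $\tau$), assert that each such vertex is charged at most four times, and then say that leaves which are not imperfect endpoints give ``only a bounded additional contribution.'' But a covering tree can have many leaves; without an argument bounding their number by $O(s_0+s_1)$ you cannot reach $8(s_0+s_1)+4$. The paper resolves this by \emph{not} charging to leaves at all. It roots $\tau$ at an arbitrary vertex $r_0$; then every maximal perfect chain either contains $r_0$ or has a unique outgoing imperfect edge (its parent edge toward the root), and distinct chains have distinct parent edges because the chains are vertex-disjoint. When $\sigma(pq)$ lands on a leaf---necessarily a chain endpoint---the charge is redirected to that chain's parent imperfect edge (or to $r_0$). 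Each chain absorbs at most $4$ such redirected charges (two endpoints, each hit at most twice by $\sigma$), so rule~(ii) contributes at most $4(s_0+s_1+1)$, while rule~(i) contributes at most $4(s_0+s_1)$ as in the path proof; summing gives the lemma. Your propagation idea is heading toward the same redirection, but without the rooting you have no injective map from chains to imperfect edges, and the bound does not follow.
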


Before the proof of Lemma~\ref{lem:tree}, we show that it
directly implies Theorem~\ref{thm:tree-lower}.

\begin{proof}[Proof of Theorem~\ref{thm:tree-lower}.]
The total number of segments in $\tau$ is
\begin{eqnarray}
s_0+s_1+s_2&=& \frac{8(s_0+s_1)+4+9s_0+9s_1-4}{17}+s_2\nonumber\\
&\geq& \frac{s_2+9s_0+9s_1-4}{17}+s_2\nonumber\\
&\geq& \frac{9(s_1+2s_2)-4}{17} =\frac{9n-4}{17},\nonumber
\end{eqnarray}
where we used Lemma~\ref{lem:tree} and the fact that
the total number of points is $n=s_1+2s_2$.
\end{proof}

For the proof of Lemma~\ref{lem:tree}, we set up a charging scheme,
similar to the proof of Lemma~\ref{pp:s01}. Lemma~\ref{pp:vertical}
continues to hold for $\tau$ in place of a noncrossing covering path $\gamma$ if we
replace the ``endpoints of $\gamma$'' by the ``leaves of $\tau$.'' While the path
$\gamma$ has exactly two endpoints, the tree $\tau$ may have arbitrarily many leaves.
Therefore, the charging scheme has to be modified so that no perfect segment
is charged to the leaves of $\tau$.

\begin{proof}[Proof of Lemma~\ref{lem:tree}.]
Since $S$ is in general position, no three perfect segments have
a common endpoint. Therefore, the perfect segments of $\tau$ form disjoint paths.
Let $\Gamma$ be the set of maximal chains of perfect segments; and
let $\Gamma_x$ denote the set of maximal $x$-monotone chains of perfect
segments in $\tau$.

Choose an arbitrary vertex $r_0$ in $\tau$ as a \emph{root}, and
direct all edges of $\tau$ towards $r_0$. Every chain in $\Gamma$ is
incident to either vertex $r_0$, or to a unique outgoing imperfect
edge. Since every chain in $\Gamma$ has exactly two endpoints,
at most two vertices of a chain can have degree 1 in the tree $\tau$.

In the proof of Lemma~\ref{pp:s01}, we charged every perfect segment
$pq$ of a covering path $\gamma$ to a point $\sigma(pq)$, which was
an endpoint of an imperfect segment or the endpoint of $\gamma$. The
function $\sigma$ relied on the properties established in Lemmas
\ref{pp:perfect}--\ref{pp:vertical}. These Lemmas also hold for the
covering tree $\tau$, if we replace the endpoints of $\gamma$ by the
leaves (\ie , vertices of degree 1) in $\tau$. With this interpretation,
every perfect segment $pq$ is assigned to a point $\sigma(pq)$, which
is either an endpoint of an imperfect segment of $\tau$ or an endpoint
of a chain in $\Gamma$. We are now ready to define our charging scheme
for $\tau$. Let $pq$ be a perfect segment of $\tau$.
\begin{itemize}
\item[{\rm (i)}] If $\sigma(pq)$ is an endpoint of an imperfect segment of $\tau$,
                 then charge $pq$ to $\sigma(pq)$.
\item[{\rm (ii)}] Otherwise $\sigma(pq)$ is an end point of a chain
  $\gamma_{pq}\in \Gamma$.
                 In this case, if $\gamma_{pq}$ is incident to $r_0$,
                 then charge $pq$ to the root $r_0$ of $\tau$, else
                 charge $pq$ to the outgoing imperfect edge of $\tau$
                 incident to $\gamma_{pq}$.
\end{itemize}

Similarly to the proof of Lemma~\ref{pp:s01}, each endpoint of an
imperfect segment is charged at most twice. Since every imperfect
segment has two endpoints, rule~(i) is responsible for a total charge
of at most $4(s_0+s_1)$. Each of the two endpoints of a chain $\gamma\in \Gamma$
is charged at most twice by $\sigma$, and so the root $r_0$ and every
(directed) imperfect segment is charged at most four times. Thus rule~(ii)
is responsible for a total charge of at most $4(s_0+s_1+1)$. Altogether the
total charge assigned by rules (i) and (ii) is $s_2\leq 8(s_0+s_1)+4$, as required.
\end{proof}

\begin{proof}[Proof of Proposition~\ref{prop:tree:s(n)}.]
The case of small $n$ ($n \leq 4$) is easy to handle, so assume that $n \geq 5$.
Given $S$, compute $\conv(S)$ and let $s_1$ be a segment extension of an
arbitrary edge of $\conv(S)$; $s_1$ is long enough so that it intersects all
non-parallel lines induced by pairs of points in $S':=S \setminus S \cap s_1$.
For simplicity of exposition assume that $s_1$ is a vertical segment
with all other points in $S'$ lying left of $s_1$.
If $\conv(S')$ is a vertical segment, since $|S| \geq 5$, it is easy
to find a $3$-segment covering tree for $S$. If $\conv(S')$ is not a
vertical segment, select a non-vertical hull edge of $\conv(S')$
and extend it to the right until it hits $s_1$ and to the left
until it hits all other non-parallel lines induced by pairs of points.
Let $s_2$ be this segment extension.

Continue in a similar way on the set of remaining points,
$S'':=S' \setminus S' \cap s_2$, by choosing an arbitrary edge
of $\conv(S'')$ and extending it until it hits $s_1$ or $s_2$.
If a single point is left at the end, pick a segment incident to it
and extend it until it hits the tree made from the previously chosen segments.
Otherwise continue by extending an arbitrary hull edge of the
remaining points until it hits the tree made from the previously chosen segments.
Clearly the resulting tree covers all points and has at most
$\lceil n/2 \rceil$ segments.

For the lower bound, it is clear
that $n$ points in general position require at least $\lceil n/2 \rceil$
segments in any covering tree.
\end{proof}

\section{Bicolored Variants} \label{sec:two}

\begin{proof}[Proof of Corollary~\ref{C1}.]
For the upper bound, we proceed as follows. Assume without loss of
generality that no two points have the same $x$-coordinate (after a
suitable rotation of the point set, if needed). We have $|B|+|R|=n$, and
assume w.l.o.g. that $|B| \leq n/2 \leq |R|$. Cover the red points
by an $x$-monotone spanning path $\pi_R$, which is clearly noncrossing.
Let $B=B_1 \cup B_2$ be the partition of the blue points induced by
$\pi_R$ into points above and below the red path (remaining points are
partitioned arbitrarily). Cover the points in $B_1$ (above $\pi_R$)
by an $x$-monotone covering path: for each consecutive pair of points
in the $x$-order, extend two almost vertical rays that meet far above
$\pi_R$ without crossing $\pi_R$.
Proceed similarly for covering the points in  $B_2$ (below $\pi_R$).
Connect the two resulting blue covering paths for $B_1$ and $B_2$
by using at most $O(1)$ additional segments.

The number of segments in the red path is $|R|-1$. The number of segments
in the blue path is $2|B|+O(1)$. Consequently, since  $|B| \leq n/2$,
the two covering paths comprise at most $3n/2+O(1)$ segments.
After sorting the red and blue points along a suitable direction,
a pair of mutually noncrossing covering paths as above can be
obtained in $O(n)$ time. So the entire procedure takes $O(n \log{n})$
time.

For the lower bound, use a red and a blue copy of the point set
constructed in the proof of Theorem~\ref{thm:path-lower}, each with $n/2$ points,
so that no three points are collinear.
Since covering each copy requires at least
$(5n/9-O(1))/2$ segments in any noncrossing covering path,
the resulting $n$-element point set requires at least
$5n/9 -O(1)$ segments in any pair of mutually noncrossing covering paths.
\end{proof}

\begin{proof}[Proof of Corollary~\ref{C2}.]
For the lower bound we use two copies, red and blue, of the point-set 
from the proof of Theorem~\ref{thm:tree-lower} (which is the same as
the point-set from the proof of Theorem~\ref{thm:path-lower}). 
It remains to show the upper bound.
Assume without loss of generality that no two points have the same
$x$-coordinate. Cover the blue points by a blue star with a center
high above, and the red points by a red star with a center way below.
Obviously, each star is non-crossing, and the distinct $x$-coordinates
of the points suffice to guarantee that the two stars are mutually 
non-crossing for suitable center positions. The two centers can be 
easily computed after sorting the points in the above order.
\end{proof}

\section{Computational Complexity}\label{sec:complexity}

\paragraph{Proof of Theorem~\ref{T3}.}
We make a reduction from the sorting problem in the
algebraic decision tree model of computation.
Given $n$ distinct numbers, $x_1,\ldots,x_n$, we map them in $O(n)$
time to $n$ points on the parabola $y=x^2$: $x_i \to (x_i, x_i^2)$;
similar reductions can be found in~\cite{PS85}.
Let $S$ denote this $n$-element point set.
Since no 3 points are collinear, any covering path for $S$ has at
least $\lceil n/2\rceil +1$ vertices.
We show below that, given a noncrossing covering path of $S$ with
$m=\Omega(n)$ vertices, the points in $S$
can be sorted in left to right order in $O(m)$ time;
equivalently, given a noncrossing covering path
with $m$ vertices, the $n=O(m)$ input numbers can be sorted in
$O(m)$ time. Consequently, the $\Omega(n \log{n})$ lower bound is
then implied. Thus it suffices to prove the following.

\begin{quote}
Given a noncrossing covering path $\gamma$ of $S$ with $m$ vertices,
     the points in $S$ can be sorted in left to right order in $O(m)$ time.
\end{quote}

The boundary of the convex hull of $\gamma$ is a closed polygonal curve,
denoted $\partial \conv(\gamma)$. Melkman's algorithm~\cite{Me87} computes
$\partial \conv(\gamma)$ in $O(m)$ time. (See~\cite{Aloupis} for a
review of convex hull algorithms for simple polygons, and~\cite{BC06}
for space-efficient variants). Triangulate all faces of the plane graph
$\gamma \cup \partial \conv(\gamma)$ within $O(m)$ time~\cite{Ch91},
and let $T$ denote the triangulation. The parabola $y=x^2$ intersects
the boundary of each triangle at most 6 times (at most twice per edge).
The intersection points can be sorted in each triangle in $O(1)$ time.
So we can trace the parabola $y=x^2$ from triangle to triangle through the
entire triangulation, in $O(1)$ time per triangle, thus in $O(m)$ time overall.
Since all points of $S$ are on the parabola, one can report the sorted order
of the points within the same time.
\qed

\section{Conclusion} \label{sec:conclusion}

We conclude with a few (new or previously posed) questions and some remarks.

\begin{enumerate}
\item
It seems unlikely that every point set with no three collinear points
admits a covering path with $n/2 +O(1)$ segments. Can a lower bound of the
form $f(n)=n/2 + \omega(1)$ be established?

\item
It remains an open problem to close or narrow the gap between the lower and
upper bounds for $g(n)$, $(5n-4)/9\leq g(n)\leq \lceil(1-1/601080391)n\rceil-1$;
and for $t(n)$, $(9n-4)/17\leq t(n)\leq \lfloor 5n/6\rfloor$.

\item
Let $p(n)$ denote the maximum integer such that every set of $n$ points in
the plane has a perfect subset of size $p(n)$.
As noticed by Welzl~\cite{DO11,We11}, $p(n)=\Omega(\log{n})$ immediately
follows from the theorem of Erd\H{o}s and Szekeres~\cite{ES35}. Any
improvement in this lower bound would lead to a better upper bound
on $f(n)$ in Theorem~\ref{T1}, and thus to a smaller gap relative to the trivial
lower bound $f(n) \geq n/2$. It is a challenging question whether Welzl's lower bound
$p(n)=\Omega(\log{n})$ can be improved; see also~\cite{DO11}.

\item
It is known that the minimum-link covering path problem
is NP-complete for planar paths whose segments are unrestricted in
orientation~\cite{AMP03,KKM94}.
It is also NP-complete for axis-parallel paths in $\RR^{10}$, as shown
in~\cite{J12}. Is the minimum-link covering path problem still NP-complete for
axis-aligned paths in $\RR^d$ for $2\leq d\leq 9$?
It is known~\cite{BBD+08} that a minimum-link axis-aligned covering path in the plane
can be approximated with ratio $2$. Can the
approximation ratio of $2$ be reduced?

\item
Is the minimum-link covering path problem still NP-complete for
points in general position and arbitrary oriented paths?

\item
Is the minimum-link covering path problem still NP-complete for
points in general position and arbitrary oriented noncrossing paths?

\item
Given $n$ points ($n$ even), is it possible to compute a noncrossing
perfect matching in $O(n)$ time? Observe that such a matching
can be computed in $O(n \log{n})$ time by sorting the points along
some direction. The same upper bound $O(n \log{n})$ holds for
noncrossing covering paths and noncrossing spanning paths, and this
is asymptotically optimal by Theorem~\ref{T3}.
Observe finally that a noncrossing spanning tree can be computed in
$O(n)$ time: indeed, just take a star rooted at an arbitrary point in
the set.
\end{enumerate}

\paragraph{Acknowledgment.}
The authors are grateful to an anonymous reviewer for the
running time improvement in Theorem~\ref{T1}, and to
the members of the MIT-Tufts Computational
Geometry Research Group for stimulating discussions.

\end{document}